\documentclass[12pt]{article}
\usepackage{amsthm}
\usepackage{amsmath}
\usepackage{cite}
\usepackage{tikz}
\usetikzlibrary{arrows}
\usepackage{amssymb}
\usepackage{enumerate}
\usepackage[margin=1.3in]{geometry}
\usepackage{float}

\newtheorem{theorem}{Theorem}[section]

\newtheorem{corollary}[theorem]{Corollary}
\newtheorem{lemma}[theorem]{Lemma}
\newtheorem{definition}[theorem]{Definition}

\newtheorem{question}[theorem]{Question}

\newtheorem{mainthm}{Theorem}

\include{epsf}
\makeatletter
\newcommand*{\rom}[1]{\expandafter\@slowromancap\romannumeral #1@}
\makeatother

\begin{document}

\title{Hamiltonian torus actions and the unimodality of odd Betti numbers}

\author{Nicholas Lindsay}

\maketitle

\begin{abstract} This paper is dedicated to the question: Is the sequence of odd Betti numbers of a closed symplectic manifold with a Hamiltonian torus action unimodal? Recently, there was some progress on the question for the sequence of even Betti numbers by Cho-Kim and the author. The results of this paper give positive evidence in the case of odd Betti numbers, in dimensions $6,8$ and $10$ under progressively stronger symmetry assumptions. 
\end{abstract}

\section{Introduction}
The main motivating result of this article is the hard Lefschetz theorem. For a closed K\"{a}hler manifold $X$ define the Lefschetz operator as the cup product with the K\"{a}hler form.

\begin{theorem}  \label{HLT} \cite{Le}
For a closed K\"{a}hler manifold $X$ of complex dimension $n$ and integer $0<i<n$, the $i$'th power of the Lefschetz operator gives an isomorphism: $$L_{i}: H^i(X,\mathbb{R}) \rightarrow H^{2n-i}(X,\mathbb{R}).$$ 
\end{theorem}

For a closed symplectic manifold $M$, the Lefschetz operator is defined as the cup product with the symplectic form, and the above maps $L_{i}$ can be defined similarly. If $i$ is odd and $L_i$ is an isomorphism then $b_{i}(M)$ is even because $(\alpha,\beta) \mapsto \alpha \cup L_{i}\beta$ is a non-degenerate, skew-symmetric form. The first example of a closed symplectic $4$-manifold with $b_1(M)$ odd was the Kodaira-Thurston manifold \cite{Th}.  Examples of closed symplectic manifolds such that the underlying manifold is diffeomorphic to a K\"{a}hler manifold and $L_{2}$ is not an isomorphism were given in \cite[Theorem 1.3]{C1}. These examples have a non-trivial Hamiltonian $S^1$-action such that the reduced spaces are certain complex projective surfaces,  for instance a K3 surface. For cases where symmetry assumptions do imply that $L_{i}$ is an isomorphism, see \cite{CK3}.

Let $X$ be a closed K\"{a}hler manifold of complex dimension $n$ and $i$ an integer such that $i +2 \leq n$. Theorem \ref{HLT} implies that the Lefschetz operator is injective and therefore the inequality  $b_{i}(X) \leq b_{i+2}(X)$ holds. This collection of inequalities is referred to as \textit{the unimodality of Betti numbers}. The following problem was given by Tolman in \cite[Problem 4.3]{JHKLM}:
\begin{question} \label{question}
Let $(M,\omega)$ be a closed symplectic manifold of dimension $2n$ having a non-trivial Hamiltonian $S^1$-action. Are the Betti numbers of $M$ unimodal?
\end{question}

Let us mention some of the progress towards Question \ref{question} in the case of even Betti numbers. It follows from a result of Tolman and Weitsman \cite{TW} that if $(M,\omega)$ has a semi-free Hamiltonian $S^1$-action with isolated fixed points then $b_{i}(M) = {n \choose i}$, so the answer to Question \ref{question} is positive in this case. Under the assumption that the action has an index-increasing moment map a positive answer to Question  \ref{question} was given by Cho \cite{C}. 

In dimension $8$, progress towards Question \ref{question} in the case of even Betti numbers was made by Cho-Kim \cite[Theorem 1.2]{CK} and the author \cite[Theorem 5.1]{L2}. It turns out that extremal submanifolds, i.e. the fixed submanifolds of $M$ on which the Hamiltonian $H$ attains its minimum and maximum appear in the condition of  \cite[Theorem 5.1]{L2}. The extremal submanifolds are denoted $M_{\min},M_{\max} \subset M^{S^1} \subset M$ and are connected by a result of Atiyah and Guillemin-Sternberg \cite{A,GS}. 

This paper begins to address Question \ref{question} in the case where $k$ is odd.  The main results concern closed symplectic manifolds of dimensions $6,8$ and $10$ under progressively stronger symmetry assumptions. In the course of the proofs of these results some general observations which are useful for Question \ref{question} are made: it is shown in Lemma \ref{fourmin} that if there is an extremal component of dimension $0,4$ or $2n-2$ then $b_{1}(M) \leq b_{3}(M)$.
The proof is an application of the localization of Betti numbers theorem due to Kirwan \cite{Ki}, see Theorem \ref{locbet} for the statement.

The first main result of the article is as follows, it is proved in Theorem \ref{six}. 

 \begin{mainthm}\label{thm:mainA} Let $(M,\omega)$ be a closed, connected, symplectic $6$-manifold with a non-trivial Hamiltonian $S^1$-action. Then $b_{1}(M) \leq b_{3}(M)$.
\end{mainthm}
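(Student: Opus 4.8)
The plan is to reduce to one essential case via Lemma \ref{fourmin} and then dispose of it by localization. In dimension $6$ the extremal components $M_{\min}$ and $M_{\max}$ are connected symplectic submanifolds of dimension $0$, $2$ or $4$, and $4=2n-2$ here; so if either of them has dimension $0$ or $4$, then Lemma \ref{fourmin} already gives $b_1(M)\le b_3(M)$. The remaining case, which I would treat directly, is that $M_{\min}$ and $M_{\max}$ are both closed surfaces, of genera $g_{\min}$ and $g_{\max}$ respectively.

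First I would pin down $b_1$ and $b_3$. By Kirwan's Betti-number localization (Theorem \ref{locbet}) one has $P_t(M)=\sum_F t^{\lambda_F}P_t(F)$ over the fixed components $F$ with Morse index $\lambda_F$. A short case analysis on dimensions and indices shows that in dimension $6$ an \emph{interior} fixed component is either an isolated point (of index $2$ or $4$) or a closed surface of index $2$; hence $b_1(M)=b_1(M_{\min})$ and $b_3(M)=\sum_F b_1(F)=\sum_F 2g_F$, the latter sum over the interior fixed surfaces $F$ (of genus $g_F$). Since $M$ is built from a neighbourhood of $M_{\min}$ by attaching handles of index $\ge 2$, the restriction $H^1(M;\mathbb{Q})\to H^1(M_{\min};\mathbb{Q})$ is an isomorphism; running the same argument with $-H$ gives $g_{\min}=g_{\max}=:g$ and identifies $b_1(M)$ with $2g$. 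So the theorem becomes the inequality $\sum_F g_F\ge g$.

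For that inequality I would work with the skew bilinear forms on $H^1(M;\mathbb{Q})$ attached to the closed fixed surfaces. For such a surface $F$ set $Q_F(\alpha,\beta)=\langle\alpha|_F\cup\beta|_F,[F]\rangle$; since $Q_F$ factors through $H^1(M)\to H^1(F)$ its rank is at most $2g_F$, while $Q_{M_{\min}}$ (and $Q_{M_{\max}}$) is the intersection form of a genus-$g$ surface transported through the isomorphism above, hence nondegenerate on $\mathbb{Q}^{2g}$. Now apply Atiyah--Bott--Berline--Vergne localization to the equivariant classes $\alpha\cup\beta$ and $\alpha\cup\beta\cup[\omega]_{S^1}$ (the equivariant symplectic class, which restricts to $\omega|_F+H(F)u$ on $F$). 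Both have equivariant degree smaller than $\dim M=6$, so $\int_M$ of either vanishes. On the fixed-point side isolated points contribute nothing because $H^1(\mathrm{pt})=0$, and on a surface $F$ the term involving $\omega|_F$ drops out for degree reasons, so the two vanishing integrals become, for all $\alpha,\beta\in H^1(M;\mathbb{Q})$,
\[
\sum_F\frac{Q_F(\alpha,\beta)}{a_1^F a_2^F}=0,\qquad \sum_F\frac{H(F)\,Q_F(\alpha,\beta)}{a_1^F a_2^F}=0,
\]
the sums running over all closed fixed surfaces, with $a_1^F,a_2^F$ the two normal weights of $F$.

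Finally I would eliminate. Viewing the displayed relations as two linear equations for the forms $\{Q_F\}$, the $2\times 2$ block in the unknowns $Q_{M_{\min}},Q_{M_{\max}}$ has determinant a nonzero multiple of $H(M_{\max})-H(M_{\min})\ne 0$ (the action is non-trivial), so $Q_{M_{\min}}$ is a $\mathbb{Q}$-linear combination of the $Q_F$ with $F$ interior. Then $\ker Q_{M_{\min}}\supseteq\bigcap_F\ker\bigl(H^1(M)\to H^1(F)\bigr)$, a subspace of codimension at most $\sum_F 2g_F$; since $Q_{M_{\min}}$ is nondegenerate on $\mathbb{Q}^{2g}$ this forces $2g\le\sum_F 2g_F$, i.e.\ $b_1(M)\le b_3(M)$. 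The step I expect to need the most care is the dimension--index bookkeeping that pins down which fixed components enter $b_1$, $b_3$ and the two localization identities; this is where $\dim M=6$ is genuinely used, since it leaves only $M_{\min}$ and $M_{\max}$ as possible nondegenerate $Q_F$ not already counted in $b_3$, and exactly two localization identities available to remove them.
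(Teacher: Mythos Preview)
Your argument is correct and takes a genuinely different route from the paper. Both proofs begin identically: Lemma \ref{fourmin} reduces to the case where $M_{\min}$ and $M_{\max}$ are surfaces, and Theorem \ref{locbet} then identifies $b_1(M)=2g$ with $g=g(M_{\min})=g(M_{\max})$ and $b_3(M)=\sum_F 2g_F$ over the interior fixed surfaces. From here the paper invokes the external geometric result \cite[Theorem~9.10]{LP1}, which (when $g>0$) furnishes a \emph{single} non-extremal fixed surface $F$ admitting a map of degree $1$ or $2$ onto $M_{\min}$; surjectivity on $H_1$ gives $g_F\ge g$, so already one summand yields $b_3(M)\ge 2g_F\ge 2g=b_1(M)$. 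You instead run Atiyah--Bott--Berline--Vergne localization on $\alpha\cup\beta$ and $\alpha\cup\beta\cup[\omega]_{S^1}$, obtain two linear relations among the skew forms $Q_F$, invert the $2\times 2$ block at the extremal surfaces (nonzero determinant since $H(M_{\max})\neq H(M_{\min})$), and use nondegeneracy of $Q_{M_{\min}}$ together with the codimension bound $\mathrm{codim}\bigl(\bigcap_F\ker(H^1(M)\to H^1(F))\bigr)\le\sum_F 2g_F$ to conclude $2g\le\sum_F 2g_F$. Your proof is more self-contained, using only equivariant formality and localization and avoiding the dependence on \cite{LP1}; the paper's proof, by contrast, yields the strictly stronger statement that some individual interior fixed surface already has genus at least $g$, not merely that the genera sum to at least $g$.
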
 The proof is an application of the localization of Betti numbers theorem due to Kirwan (see Theorem \ref{locbet}) and previous work of the author and Panov \cite{LP1} (see Theorem \ref{sixfixed}). We give a brief overview of the argument.

By Lemma \ref{fourmin} it is only necessary to prove the statement for the case $\dim(M_{\min}) = \dim(M_{\max}) = 2$.  By Theorem \ref{locbet}, $b_{1}(M_{\min}) = b_{1}(M_{\max}) = b_{1}(M)$ hence $M_{\min},M_{\max}$ have the same genus. If  $g(M_{\min}) = 0$ then $b_{1}(M) = 0$ by Theorem \ref{locbet} so the desired statement is proven.  If $g(M_{\min}) >0$  then Theorem \ref{sixfixed} ensures the existence of a non-extremal fixed component $F$ of dimension $2$ with  $g(F) \geq g(M_{\min})=g(M_{\max})$, therefore by Theorem \ref{locbet} $b_1(M) \leq b_{3}(M)$.

The second main result of the article is as follows, it is proved in Theorem \ref{eightmain}.
 \begin{mainthm}\label{thm:mainB} Let $(M,\omega)$ be a closed, connected, symplectic $8$-manifold with an effective Hamiltonian $S^1$-action, such that each non-zero weight along $M_{\min}$ and $M_{\max}$ has modulus greater than $1$. Then $b_{1}(M) \leq b_{3}(M)$.
\end{mainthm}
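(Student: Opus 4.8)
\emph{Proof plan.} By Lemma \ref{fourmin} the conclusion holds whenever one of the extremal components has dimension $0$, $4$ or $2n-2=6$, so I may assume $\dim M_{\min}=\dim M_{\max}=2$. By Theorem \ref{locbet} the natural maps identify $b_1(M)=b_1(M_{\min})=b_1(M_{\max})$; hence $M_{\min}$ and $M_{\max}$ are closed oriented surfaces of a common genus $g$ and $b_1(M)=2g$. If $g=0$ then $b_1(M)=0\le b_3(M)$ and there is nothing to prove, so from now on $g\ge 1$.

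Next I extract what Theorem \ref{locbet} says about $b_3$. Each fixed component $F$ contributes $t^{\lambda_F}P_t(F)$ to $P_t(M)$, where $\lambda_F$ is twice the number of negative weights of the normal isotropy representation; since $\lambda_F$ is even, only $\lambda_F\in\{0,2\}$ can affect the coefficient of $t^3$. A component with $\lambda_F=0$ is a local minimum of the moment map, hence (sublevel sets being connected) equal to $M_{\min}$, and it contributes $b_3(M_{\min})=0$. Therefore
\[
b_3(M)=\sum_{F:\ \lambda_F=2}b_1(F),
\]
summed over the fixed components $F$ whose normal representation has exactly one negative weight. Such $F$ are automatically non-extremal, and since a non-extremal component also has a positive weight, $F$ has complex codimension at least $2$, so $\dim F\in\{2,4\}$ whenever $b_1(F)\ne 0$. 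It therefore suffices to produce one non-extremal fixed component $F$, of dimension $2$ or $4$, with exactly one negative weight and with $b_1(F)\ge 2g$.

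The plan to find such $F$ is to follow the first Betti number through the symplectic reductions. Fix regular values $c_{\min}<t_0<\dots<t_N<c_{\max}$ of $H$, one between each pair of consecutive critical values, and let $X_i=H^{-1}(t_i)/S^1$ be the corresponding reduced spaces (symplectic orbifolds in general). Just above the minimum, $X_0$ is the weighted projectivization of the rank-$3$ normal bundle of $M_{\min}$, a fibre bundle over $M_{\min}$ with simply connected fibre $\mathbb{P}(w_1,w_2,w_3)$, so $b_1(X_0)=b_1(M_{\min})=2g$; symmetrically $b_1(X_N)=2g$. By the Guillemin--Sternberg description of wall-crossing, each passage from $X_{i-1}$ to $X_i$ differs by a weighted blow-up and blow-down performed along weighted projective bundles over the fixed component $F_i$ at the intervening critical value; since such blow-ups leave the first Betti number unchanged, $b_1(X_i)=2g$ for every $i$, and when $\lambda_{F_i}=2$ the centre of the relevant blow-up is $F_i$ itself, embedded as a symplectic suborbifold of $X_{i-1}$. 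The role of the hypothesis that every non-zero weight along $M_{\min}$ and $M_{\max}$ has modulus greater than $1$ is to control the orbifold loci of the $X_i$ near the two ends: I expect it forces the $2g$-dimensional space $H^1$, present in every $X_i$, to be carried at some wall by a fixed component $F_i$ of dimension $2$ or $4$ with $\lambda_{F_i}=2$, so that $b_1(F_i)\ge b_1(X_{i-1})=2g$ and hence $b_3(M)\ge b_1(F_i)\ge 2g=b_1(M)$. A weight equal to $1$ would instead let $X_0$ be a genuine (non-orbifold) projective bundle linked to $X_N$ through ordinary blow-ups and blow-downs, which need never meet such a fixed component.

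The main obstacle is this last step. Concretely one must (i) make the relation between $H^1(X_{i-1})$, $H^1(X_i)$ and $H^1(F_i)$ at each wall-crossing precise, tracking the orbifold corrections coming from the weights; (ii) use the weight hypothesis to exclude the configurations in which the first cohomology ``escapes past'' all index-$2$ fixed components of dimension $2$ or $4$; and (iii) deduce the existence of the required $F$. An equivalent, and I believe equally hard, route is to prove a dimension-$8$ analogue of \cite[Theorem 9.10]{LP1} under the present weight hypothesis: that a Hamiltonian $S^1$-manifold with surface extremal sets of positive genus $g$ and all extremal weights of modulus $>1$ must have a non-extremal fixed component of dimension $2$ or $4$, with exactly one negative weight, whose first Betti number is at least $2g$.
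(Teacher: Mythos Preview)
Your proposal is not a proof: you explicitly flag the crucial step as an ``obstacle'' and only describe what one would have to do in items (i)--(iii), without carrying any of it out. The wall-crossing heuristic that $b_1(X_i)=2g$ for all $i$ does not by itself produce a non-extremal fixed component $F$ with $\lambda_F=2$ (in the paper's convention, one negative weight) and $b_1(F)\ge 2g$; you would need a concrete mechanism linking $H^1$ of the reduced space to $H^1$ of the wall component, and you have not supplied one. Your claim that the weight hypothesis prevents $X_0$ from being a genuine projective bundle, and that this is what blocks the first cohomology from ``escaping'', is not justified and it is unclear how the orbifold structure of $X_0$ would feed into the existence of the required $F$.

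The paper takes an entirely different and much more elementary route that avoids reductions and wall-crossing altogether. After the same initial reduction to $\dim M_{\min}=\dim M_{\max}=2$ and $g\ge 1$, it looks at the isotropy submanifolds $M^{\mathbb{Z}_k}$ through $M_{\min}$. If any of these is $6$-dimensional, a short case analysis on its own maximum (using Theorem~\ref{locbet} and \cite[Theorem 9.10]{LP1}) already yields $b_1(M)\le b_3(M)$. So one may assume all isotropy manifolds through the extremal surfaces are $4$-dimensional; combined with the hypothesis that all extremal weights have modulus $>1$, this forces the three weights $a_1<a_2<a_3$ at $M_{\min}$ to be pairwise coprime. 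Each of the three isotropy $4$-manifolds through $M_{\min}$ must then also contain $M_{\max}$ (otherwise its maximum is a non-extremal fixed surface with $b_1\ge 2g$, and one is done), and this pins down the weights at $M_{\max}$ as $\{-a_1,-a_2,-a_3\}$. Tolman's weight-congruence lemma \cite[Lemma 2.6]{T} applied to each $\mathbb{Z}_{a_i}$-isotropy $4$-manifold then gives
\[
\{a_1,a_2,a_3\}\equiv\{-a_1,-a_2,-a_3\}\pmod{a_i}\qquad(i=1,2,3),
\]
and a short arithmetic argument (Proposition~\ref{numberprop}) shows no pairwise coprime triple with $1<a_1<a_2<a_3$ can satisfy this. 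Thus the remaining configuration is impossible, completing the proof. The key idea you are missing is precisely this use of isotropy $4$-manifolds to tie the weights at $M_{\min}$ and $M_{\max}$ together and convert the problem into an impossible system of congruences.
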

Using Lemma \ref{fourmin} and  Theorem \ref{sixfixed}, we reduce the proof to the situation where $\dim(M_{\min}) = \dim(M_{\max}) = 2$ and there are $3$, $4$-dimensional isotropy manifolds containing $M_{\min}$. Moreover, since the maximum of each of these isotropy $4$-manifolds is a surface with genus equal to that of $M_{\min}$, it must hold that all three of these isotropy $4$-manifolds contain $M_{\max}$. Then,  \cite[Lemma 2.6]{T} implies various congruences between the weights at $M_{\min}$ and $M_{\max}$. We show using arithmetic arguments in Lemma \ref{numberprop} that such a system of congruences cannot hold.

The final main result of the article is as follows, it is proved in Theorem \ref{tenmain}.

 \begin{mainthm}\label{thm:mainC} Let $(M,\omega)$ be a closed, connected, symplectic $10$-manifold with an effective Hamiltonian $T^2$-action. Then $b_{1}(M) \leq b_{3}(M)$.
\end{mainthm}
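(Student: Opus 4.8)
The plan is to combine Kirwan's perfect Morse--Bott theory (Theorem~\ref{locbet}) with Lemma~\ref{fourmin}, using that $T^2$ contains many circle subgroups. Write $\Phi\colon M\to\mathbb R^2$ for the moment map and $\Delta=\Phi(M)$ for the moment polygon, which is genuinely two-dimensional because the action is effective. For a primitive $\xi\in\mathbb Z^2$ away from finitely many lines, $S^1_\xi$ is an effective nontrivial Hamiltonian circle with $M^{S^1_\xi}=M^{T^2}$ and moment map $\langle\Phi,\xi\rangle$, whose minimum is the preimage of the vertex of $\Delta$ minimizing $\langle\cdot,\xi\rangle$. By the local normal form at a vertex $v$, the component $\Phi^{-1}(v)$ is $T^2$-fixed with normal bundle $\mathbb C^{k_v}$ whose $k_v$ weights span $\mathbb R^2$ (as $\Delta$ has a genuine corner at $v$), so $\dim\Phi^{-1}(v)=10-2k_v\in\{0,2,4,6\}$, and every vertex is minimizing for some such $\xi$. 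Hence, if some vertex preimage has dimension $0$ or $4$ (dimension $8$ being impossible here), Lemma~\ref{fourmin} already gives $b_1(M)\le b_3(M)$; so we may assume every vertex preimage has dimension $2$ or $6$.

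Suppose first some vertex $v$ has $\dim\Phi^{-1}(v)=6$, so $k_v=2$ and the two weights $\alpha_1,\alpha_2$ at $\Phi^{-1}(v)$ are linearly independent and point along the two edges at $v$. Let $K$ be the identity component of $\ker\alpha_1$ and $N$ the component of $M^{K}$ containing $\Phi^{-1}(v)$; the normal form shows $N$ is a symplectic $8$-manifold with a nontrivial Hamiltonian $T^2/K\cong S^1$-action, and $\Phi^{-1}(v)$ is an extremal submanifold of $N$ of dimension $6=\dim N-2$. Lemma~\ref{fourmin} applied to $N$ gives $b_1(N)\le b_3(N)$, while Theorem~\ref{locbet} applied to $N$ gives $b_1(N)=b_1(\Phi^{-1}(v))$. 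I would then choose $\xi$ in the interior of the normal cone of $v$, generic and close to the ray $\alpha_1^{\perp}$, so that $\langle\Phi,\xi\rangle$ has minimum $\Phi^{-1}(v)$ and, at every $T^2$-fixed component $G\subseteq N$, the extra normal weight of $N$ in $M$ (which is congruent to $\alpha_2$ modulo $\mathbb Z\alpha_1$, the $K$-weight of the normal bundle of $N$ being constant along $N$) pairs positively with $\xi$. Then the Morse index of each such $G$ in $M$ equals its index in $N$; since the $T^2$-fixed components meeting $N$ are precisely those contained in $N$, comparing the two localization formulas yields $b_3(M)\ge b_3(N)\ge b_1(N)=b_1(\Phi^{-1}(v))=b_1(M)$. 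The delicate point is the constancy of the $K$-weight and the consequent sign control, which I would derive from the local normal form and convexity of $\Delta$.

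It remains to treat the case where every vertex preimage of $\Delta$ is a surface; by Theorem~\ref{locbet} all have a common genus $g=\tfrac12 b_1(M)$. If $g=0$ then $b_1(M)=0\le b_3(M)$, so assume $g>0$. For an edge $E$ of $\Delta$, the associated isotropy submanifold (the component of $M^{\ker\alpha}$ through an endpoint of $E$, $\alpha$ a weight there directed along $E$) is, by the normal form, symplectic of dimension $2+2j$ with $1\le j\le 3$, with a nontrivial Hamiltonian $S^1$-action whose minimum is a genus-$g$ surface. If $j=2$ this is a symplectic $6$-manifold, so Theorem~\ref{thm:mainA} applies, and the index-preservation argument above transfers $b_1\le b_3$ back to $M$. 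If $j=1$ it is a ruled surface over the genus-$g$ base, carrying no new information; if $j=3$ it is $8$-dimensional, and \cite[Theorem 9.10]{LP1} together with Theorem~\ref{locbet} produces inside it a non-extremal $2$-dimensional fixed component of genus $\ge g$.

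I expect the main obstacle to be the residual configurations in which every edge of $\Delta$ has $j\in\{1,3\}$, where Theorems~\ref{thm:mainA} and~\ref{thm:mainB} do not apply directly. To finish I would apply \cite[Theorem 9.10]{LP1} at the vertices to locate non-extremal genus-$\ge g$ surfaces, use \cite[Lemma 2.6]{T} to extract congruences between their weights and the weights along the extremal surfaces through the various $4$-, $6$- and $8$-dimensional isotropy submanifolds, and reduce to an arithmetic impossibility in the spirit of Proposition~\ref{numberprop}, now over the rank-two weight lattice of the $T^2$-action, forcing at least one such surface to have Morse index or co-index $2$ and hence $b_3(M)\ge 2g=b_1(M)$.
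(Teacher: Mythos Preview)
Your argument is incomplete: the last paragraph openly leaves the case ``every vertex preimage is a surface and every edge has $j\in\{1,3\}$'' (i.e.\ every edge preimage has dimension $4$ or $8$) to an unspecified arithmetic reduction ``in the spirit of Proposition~\ref{numberprop}''. That reduction is not carried out, and there is no reason to expect it to be as clean as in the $8$-dimensional $S^1$ case; as stated the proof does not close.

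The missing observation, which the paper uses and which dissolves all of your residual cases at once, is that the moment-map preimage $\mu^{-1}(E)$ of an \emph{edge} $E$ is itself an extremal fixed submanifold for a suitable linear subcircle of $T^2$ (Lemma~\ref{edgeiso}(2)). You only invoke this for vertices, and then treat edge preimages merely as isotropy submanifolds to which you try to transfer information by an ad hoc ``index-preservation'' argument. Once you know that $\mu^{-1}(E)$ is $M_{\min}$ for some Hamiltonian $S^1$-action on $M$, the endgame is immediate: if $\dim\mu^{-1}(E)\in\{4,8\}$ then Lemma~\ref{fourmin} applied to $M$ (not to a submanifold) gives $b_1(M)\le b_3(M)$ directly, so your $j=1$ and $j=3$ cases disappear; and if $\dim\mu^{-1}(E)=6$ then Theorem~\ref{locbet} with $N:=\mu^{-1}(E)=M_{\min}$ yields $b_1(M)=b_1(N)$ and $b_3(M)\ge b_3(N)$ with no sign-control or weight-constancy argument needed, and Theorem~A applied to $N$ (which carries a nontrivial $S^1$-action by Lemma~\ref{edgeiso}(3)) finishes the proof. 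The same remark collapses your second paragraph: when a vertex preimage is $6$-dimensional, the adjacent edge preimage is $8$-dimensional and extremal in $M$, so Lemma~\ref{fourmin} applies to $M$ outright, and neither the $8$-manifold $N$ nor the delicate choice of $\xi$ is required.
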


Firstly, in Subsection \ref{ttwo} it is shown that the moment map preimages of vertices and edges of the moment polygon are extremal fixed submanifolds for certain subcircle actions of the $T^2$-action. By the aforementioned Lemma \ref{fourmin} if there is a Hamiltonian $S^1$-action having an extremal component of dimension $0,4$ or $8$ then the desired inequality holds. Therefore it is sufficient to deal with the case that the moment map preimages of vertices and edges are either $2$ or $6$ dimensional, and since the moment map preimages of edges have greater dimension than the moment map preimages of vertices, the moment map preimages of edges have dimension $6$. Since, as is recalled in Lemma \ref{edgeiso} of the preliminaries, such moment map preimages themselves admit a non-trivial Hamiltonian $S^1$-action, applying Theorem \ref{thm:mainA} shows that these submanifolds satisfy the desired Betti number inequality. Finally, pulling this inequality back to $M$ via the localization of Betti numbers Theorem \ref{locbet} proves the theorem. In the last subsection a result in arbitrary dimensions is proven.

\textbf{Acknowledgements.} I would like to thank the referees for helpful comments and the much shortened proof of Lemma 3.4. The author is funded by the position: Postdoc in complex and symplectic geometry in memory of Paolo de Bartolomeis. 
 \section{Preliminaries}

 \subsection{Torus actions}
In this section, some foundational preliminaries for smooth torus actions are presented. Let $M$ be a closed, connected, smooth manifold with a smooth effective action of a compact torus $T^{m}$. For a subgroup $K$ of $T^m$, set $$M^{K} = \{p \in M | k.p=p \;\; \forall k \in K\}.$$

$M^{K}$ is called the isotropy submanifold associated to $K$. $M^{K}$ may be described alternatively as the subset of $M$ consisting of points whose stabilizer contains $K$.

\begin{lemma} \label{isoprop}
For a subgroup $K \subset T^{m}$, it holds that: \begin{enumerate} \item Every connected component of $M^{K}$ is $T^{m}$-invariant.
\item Every connected component of $M^{K}$ is a smooth submanifold.
\end{enumerate}
\end{lemma}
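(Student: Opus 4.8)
The plan is to treat the two assertions separately: (1) follows from commutativity and connectedness of $T^k$, while (2) is the local linearization of the action near a fixed point (Bochner's theorem).

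For (1), note first that since $T^k$ is abelian, for every $h \in H$, $g \in T^k$ and $p \in M^H$ we have $h.(g.p) = g.(h.p) = g.p$, so $g.p \in M^H$; hence $M^H$ is $T^k$-invariant. If $C$ is a connected component of $M^H$, then (using that $M^H$ is invariant and $C\subseteq M^H$) the action map restricts to a continuous map $T^k \times C \to M^H$ whose domain is connected, so its image lies in a single connected component of $M^H$; that image contains $C$ (take $g$ the identity), and therefore equals $C$. Thus each component is $T^k$-invariant.

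For (2), fix $p \in M^H$ and average an arbitrary Riemannian metric on $M$ over the Haar probability measure of the compact group $T^k$ to obtain a $T^k$-invariant metric $g$. Then $H$ acts on $(M,g)$ by isometries fixing $p$, so the exponential map $\exp_p \colon T_pM \to M$ is equivariant for the isotropy representation of $H$ on $T_pM$ and the $H$-action on $M$. The fixed set $V := (T_pM)^H$ of this linear representation is a vector subspace, and for $\varepsilon$ small enough $\exp_p$ maps the ball $B_\varepsilon \subset T_pM$ diffeomorphically onto an open neighborhood $U$ of $p$. I claim $\exp_p(V \cap B_\varepsilon) = M^H \cap U$: the inclusion $\subseteq$ is immediate from equivariance, and for $\supseteq$, any $q \in M^H \cap U$ equals $\exp_p(v)$ for a unique $v \in B_\varepsilon$, and for each $h \in H$, $q = h.q = \exp_p(dh_p v)$ with $dh_p v \in B_\varepsilon$, so uniqueness forces $dh_p v = v$ and hence $v \in V$. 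This exhibits $M^H$ near $p$ as the embedded submanifold $\exp_p(V \cap B_\varepsilon)$ of dimension $\dim V$. Since $p$ was arbitrary, every connected component of $M^H$ is a smooth embedded submanifold, its dimension being $\dim (T_pM)^H$, which may vary from component to component.

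The routine but essential points to verify are that the averaged metric is smooth and genuinely $T^k$-invariant (which uses compactness of $T^k$), that isometries intertwine the exponential maps at corresponding points, and the equality $\exp_p(V\cap B_\varepsilon)= M^H\cap U$ rather than mere containment; this last point, i.e.\ that $M^H$ is locally a linear slice through $p$, is the crux. No deeper obstacle arises, since everything reduces to the linearization of the $H$-action at each of its fixed points.
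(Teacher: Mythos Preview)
Your proof is correct and follows essentially the same approach as the paper: for (1) you both use commutativity of $T^k$ to get $M^H$ invariant and then connectedness (you via $T^k\times C$, the paper via connectedness of orbits) to conclude invariance of components; for (2) you both average to obtain a $T^k$-invariant metric and use equivariance of the exponential to identify $M^H$ locally with the linear fixed subspace $(T_pM)^H$. Your version is in fact more careful in verifying the equality $\exp_p(V\cap B_\varepsilon)=M^H\cap U$, which the paper only asserts.
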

\begin{proof}
1. The fact that $M^{K}$ is $T^m$-invariant follows directly from the fact that $T^{m}$ is abelian. Let $p \in M^{K}$, then for any $g \in T^m$, $k \in K$, $k.g.p = g.k.p = g.p$, that is $g.p \in M^{K}$. Since orbits of the $T^{m}$-action are connected, invariance of the individual connected components follows.

2. By averaging, we may pick a $T^{m}$-invariant metric on $M$. Then, the exponential around any point is a local $T^{m}$-equivariant diffeomorphism. When $p \in M^{K}$, $M^{K}$ is locally the image of the subspace $V_{K} \subset T_{p}(M)$, where $V_{K}$ is the intersection of the $1$-eigenspaces of all the transformations of $T_{p}(M)$ of the form $k_{*}$, $k \in K$. Since it is the image of a linear subspace by a local diffeomorphism it is a smooth submanifold. \end{proof}

\subsection{Hamiltonian torus actions} \label{hamtoractions}
Let $w = (w_{1},\ldots,w_m) \in \mathbb{Z}^{m}$. The complex representation with weight $w$ is: $$(z_1,\ldots,z_m) .z= z_{1}^{w_1} \ldots z_{m}^{w_m} z . $$ It is known that these are the irreducible representations of $T^{m}$. For an almost complex $2n$-manifold with an almost complex $T^{m}$-action, and a fixed point $p \in M^{T^m}$, note that $T_{p}M$ has the structure of a $T^{m}$-representation. Therefore, it decomposes into $n$ irreducible representations, the associated weights are called the weights of the action at $p$. For each fixed component $F \subset M^{T^m}$ they form a multiset of cardinality $n$ consisting of elements of $\mathbb{Z}^m$.  

\begin{definition}
Let $(M,\omega)$ be a closed, symplectic manifold, with a smooth $T^m$-action by symplectomorphims. Then, the action is called Hamiltonian if there is a smooth $T^m$-invariant function $\mu: M \rightarrow \mathfrak{t}^*$ such that $$\omega (X_t, \cdot) = -d(\mu(t)),$$ for all $t \in  \mathfrak{t}$.
\end{definition}

We will need the following fundamental localization formula for the Betti numbers due to Kirwan \cite{Ki}. Let $(M,\omega)$ be a closed symplectic manifold having a Hamiltonian $S^1$-action. For $p \in M^{S^1}$ denote by $\lambda_{p}$  the number of negative weights of the action on $T_{p}M$ counted with multiplicity. For a fixed component $F$, $\lambda_F$ is well-defined because the weights of points in given fixed component are equal (see \cite[Section 5]{MS}).

 \begin{theorem} \label{locbet} \cite[Page 34]{Ki}
 Let $(M,\omega)$ be a closed, connected, symplectic manifold having a Hamiltonian circle action. Then for each $i$, $$ b_{i}(M) = \sum_{F \subset M^{S^1}} b_{i - 2\lambda_{F}(M)} (F) ,$$ where the sum runs over connected components of the fixed point set. 
 \end{theorem}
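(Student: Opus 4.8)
The plan is to realize the formula as the statement that the moment map $H=\mu$ of the circle action is a perfect Morse--Bott function. First I would verify the Morse--Bott structure: since the action is Hamiltonian, $dH_p=\omega(X,\cdot)_p$ vanishes exactly where the generating vector field $X$ vanishes, so the critical set of $H$ is precisely $M^{S^1}$. Fixing an $S^1$-invariant compatible almost complex structure (obtained by averaging), near a fixed point $p\in F$ the space $T_pM$ splits into the complex weight spaces of the isotropy representation, and in suitable coordinates $H$ agrees to second order with $\mathrm{const}+\sum_j w_j|z_j|^2$. Hence the Hessian is nondegenerate transverse to $F$, so $H$ is Morse--Bott, the critical submanifolds are the components $F\subset M^{S^1}$, and the index of $F$ is twice the number $\lambda_F$ of negative weights, namely $2\lambda_F$. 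Moreover the negative normal bundle $\nu_F^-$, being the sum of the negative weight spaces, is a complex vector bundle and in particular orientable, which is what lets me apply the Thom isomorphism with field coefficients below.

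Granting this, the formula $b_i(M)=\sum_F b_{i-2\lambda_F}(F)$ is equivalent to the equality of Poincar\'e polynomials $P_t(M)=\sum_F t^{2\lambda_F}P_t(F)$, i.e.\ to the perfection of $H$, and this is the crux. The Morse--Bott inequalities always give $\sum_F t^{2\lambda_F}P_t(F)-P_t(M)=(1+t)Q(t)$ with $Q$ having nonnegative coefficients, so it remains to show $Q=0$. I would establish this following Atiyah and Bott by passing to $S^1$-equivariant cohomology, where the decisive fact is available: the equivariant Euler class $e_{S^1}(\nu_F^-)\in H^*_{S^1}(F)$ is not a zero divisor. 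Indeed, writing $H^*_{S^1}(F)=H^*(F)\otimes\mathbb{Q}[u]$ (as $S^1$ acts trivially on $F$), the top-$u$ term of $e_{S^1}(\nu_F^-)$ is $u^{\lambda_F}$ times the product of the nonzero negative weights, hence a unit multiple of $u^{\lambda_F}$; this forces multiplication by $e_{S^1}(\nu_F^-)$ to be injective.

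With this in hand I would run the standard induction over the critical values $c_1<\cdots<c_N$. Writing $M^{(j)}=H^{-1}((-\infty,c_j+\varepsilon])$, excision and the Thom isomorphism identify $H^*_{S^1}(M^{(j)},M^{(j-1)})\cong H^{*-2\lambda_F}_{S^1}(F)$ for the critical set $F$ at level $c_j$, and the composite of $H^*_{S^1}(M^{(j)},M^{(j-1)})\to H^*_{S^1}(M^{(j)})$ with restriction to $F$ is multiplication by $e_{S^1}(\nu_F^-)$. Since the latter is injective, the former is injective, so by exactness the connecting homomorphisms vanish and the long exact sequence of the pair breaks into short exact sequences. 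This yields $P^{S^1}_t(M^{(j)})=P^{S^1}_t(M^{(j-1)})+t^{2\lambda_F}P^{S^1}_t(F)$, and induction gives equivariant perfection $P^{S^1}_t(M)=\sum_F t^{2\lambda_F}P^{S^1}_t(F)$. In particular $H^*_{S^1}(M)$ is a free $\mathbb{Q}[u]$-module (equivariant formality), so $P^{S^1}_t(M)=P_t(M)/(1-t^2)$; using likewise $P^{S^1}_t(F)=P_t(F)/(1-t^2)$ and cancelling the common factor recovers $P_t(M)=\sum_F t^{2\lambda_F}P_t(F)$, which is the theorem read off in each degree.

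The main obstacle is precisely the perfection step, i.e.\ the vanishing of the connecting maps; everything else is bookkeeping. The naive parity heuristic ``all indices $2\lambda_F$ are even, so there is no room for cancellation'' is insufficient in the Morse--Bott setting, because the critical submanifolds $F$ themselves carry cohomology in every degree and the differentials can shift between them. The genuine input is the non-zero-divisor property of the equivariant Euler class, which rests on two facts I would highlight: the weights of the isotropy representation on $\nu_F^-$ are all nonzero (so the leading $u$-coefficient is a unit), and $\nu_F^-$ is complex hence canonically oriented (so the equivariant Thom isomorphism applies over $\mathbb{Q}$). Verifying these points carefully, and identifying the relevant composite with Euler-class multiplication, is where the real work lies.
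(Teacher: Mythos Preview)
The paper does not give its own proof of this statement: it is quoted as a known result from Kirwan \cite[Page 34]{Ki}, with the remark that a more self-contained proof appears in \cite[Page 7]{T}. Your outline is the standard Atiyah--Bott/Kirwan argument (equivariant perfection of the moment map via the non--zero-divisor property of the equivariant Euler class of the negative normal bundle), and it is correct; this is precisely the approach in the references the paper cites, so there is nothing to compare.
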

A more self-contained proof of Theorem \ref{locbet} was given in \cite[Page 7]{T}. The following lemma about isotropy submanifolds will be needed.
\begin{lemma} \label{isotropy}
Let $(M,\omega)$ be a closed, connected, symplectic manifold with a Hamiltonian $T^m$-action and moment map $\mu$. Let $K$ be a closed subgroup of $T^m$ and $N$ an isotropy submanifold i.e. a connected component of $M^{K}$. Then $N$ is a symplectic submanifold and the restricted action of $T^m$ induces a Hamiltonian $T^{m}/K$-action with moment map $\mu|_{N}$.
\end{lemma}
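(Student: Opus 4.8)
The plan is to establish the two assertions separately, and throughout I would replace $H$ by its closure $\overline{H}$ — this changes nothing, since $M^{H}=M^{\overline{H}}$ by continuity of the action, while $\overline{H}$ is a compact (abelian) Lie subgroup of $T^{k}$, so $T^{k}/H$ is a Lie group. \emph{Step 1 (that $N$ is a symplectic submanifold).} By Lemma \ref{isoprop}, $N$ is a smooth, $T^{k}$-invariant submanifold, and from the proof of that lemma $T_{p}N=V_{H}$ for every $p\in N$, where $V_{H}\subset T_{p}M$ is the subspace fixed by the isotropy representation of $H$. Writing $\iota:N\hookrightarrow M$ for the inclusion, $d(\iota^{*}\omega)=\iota^{*}(d\omega)=0$, so it remains only to check that $\omega$ restricts non-degenerately to each $V_{H}$. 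Here I would fix a $T^{k}$-invariant $\omega$-compatible almost complex structure $J$, obtained by averaging a compatible metric and polar-decomposing; since $J$ is then $H$-invariant, $V_{H}$ is a $J$-complex subspace of $T_{p}M$, and for $0\neq v\in V_{H}$ one has $Jv\in V_{H}$ with $\omega(v,Jv)=g(v,v)>0$, whence $\omega|_{V_{H}}$ is non-degenerate. Thus $(N,\iota^{*}\omega)$ is symplectic.

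\emph{Step 2 (that the induced action is Hamiltonian with moment map $\mu|_{N}$).} Since $N\subseteq M^{H}$, the group $H$ acts trivially on $N$, so the restricted $T^{k}$-action on $N$ descends to a $T^{k}/H$-action. For $t\in\mathfrak{t}$ the fundamental vector field $X_{t}$ is tangent to the invariant submanifold $N$, its restriction there is the fundamental vector field of the $N$-action, and $\iota^{*}\omega=\omega|_{N}$; pulling the defining identity $\omega(X_{t},\cdot)=-d(\mu(t))$ back along $\iota$ therefore gives $\omega|_{N}(X_{t}|_{N},\cdot)=-d((\mu|_{N})(t))$. Hence $\mu|_{N}$ is a moment map for the $T^{k}$-action on $N$, and it is $T^{k}$-invariant because $\mu$ is. Finally, for $t\in\mathfrak{h}=\mathrm{Lie}(H)$ the field $X_{t}|_{N}$ vanishes, so $d((\mu|_{N})(t))=0$ and, $N$ being connected, $(\mu|_{N})(t)$ is constant; consequently $\mu|_{N}$ descends to a moment map for the $T^{k}/H$-action (after normalizing the additive constant of $\mu$ so that $\mu|_{N}$ is valued in the annihilator of $\mathfrak{h}$, i.e.\ in $(\mathfrak{t}/\mathfrak{h})^{*}$, if one wishes this literally).

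\emph{Main obstacle.} The only step that is not a formal restriction of identities along $\iota$ is the non-degeneracy in Step 1 — equivalently, the standard fact that the fixed-point set of a symplectic torus action is a symplectic submanifold — which rests on the existence of the $T^{k}$-invariant compatible almost complex structure; everything else is routine, and the write-up should be short.
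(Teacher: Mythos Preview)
Your argument is correct and is the standard one. Note, however, that the paper does not actually give a proof of this lemma: it is simply stated as a known preliminary fact, with no proof environment following it. So there is nothing to compare your approach against.

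A couple of remarks on your write-up. Your passage to the closure $\overline{H}$ is a genuine improvement over the paper's statement, which is slightly sloppy: for $T^{k}/H$ to be a Lie group one needs $H$ closed, and the paper does not impose this. Likewise, your observation that $\mu|_{N}$ literally takes values in $\mathfrak{t}^{*}$ rather than $(\mathfrak{t}/\mathfrak{h})^{*}$, and that one must either normalize or interpret the statement loosely, is a point the paper glosses over. In the paper's applications only the case $k=1$ with $H=\mathbb{Z}_{m}$ finite is used, where $\mathfrak{h}=0$ and none of these subtleties arise; this is presumably why the author did not dwell on them.
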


Note that when $m=1$ then by a continuity argument the only subgroups $K \subset S^1$ for which can satisfy $M^{K} \neq M^{S^1}$ are cyclic subgroups $\mathbb{Z}_{d} \subset S^1$ generated by $e^{\frac{2\pi i}{d}}$. In this case the action of $S^1/K \cong S^1$ is an effective Hamiltonian $S^1$-action on $M^{K}$. Another key fact which will be used repeatedly is the following.

\begin{lemma} \label{fixedintersection}
Let $N$ be an isotropy submanifold then $N^{T^m/K} = N \cap M^{T^m}$.
\end{lemma}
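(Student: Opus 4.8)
The plan is to reduce the identity to a statement about orbits. Since $N$ is a connected component of $M^{H}$, every element of $H$ acts as the identity on $N$; hence the $T^{k}$-action on $N$ descends to the quotient group $T^{k}/H$, which is precisely the induced Hamiltonian $T^{k}/H$-action of Lemma \ref{isotropy}. Under this descended action, a coset $tH \in T^{k}/H$ fixes a point $p \in N$ if and only if $t\cdot p = p$ in the original $T^{k}$-action. In other words, for $p\in N$ the $T^{k}/H$-orbit of $p$ coincides with its $T^{k}$-orbit.

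Granting this, both inclusions are immediate. First I would take $p \in N^{T^{k}/H}$; then $tH\cdot p = p$ for every $t\in T^{k}$, which says $t\cdot p = p$ for all $t\in T^{k}$, so $p\in M^{T^{k}}$, and since $p\in N$ we get $p\in N\cap M^{T^{k}}$. Conversely, if $p\in N\cap M^{T^{k}}$, then $t\cdot p=p$ for all $t\in T^{k}$, so in particular $tH\cdot p=p$ for every $tH\in T^{k}/H$, giving $p\in N^{T^{k}/H}$. This yields $N^{T^{k}/H} = N\cap M^{T^{k}}$.

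I do not expect a genuine obstacle here; the lemma is essentially a bookkeeping identity. The only point that deserves an explicit word is the well-definedness of the descended action, i.e.\ that $H$ acts trivially on all of $N$ — but this is immediate from $N \subseteq M^{H}$, the set of points fixed pointwise by $H$. Everything else is a direct unwinding of definitions.
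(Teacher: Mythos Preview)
Your argument is correct: the key observation is that $H$ acts trivially on $N\subseteq M^{H}$, so the $T^{k}$-action and the induced $T^{k}/H$-action on $N$ have identical orbits, and the two inclusions follow by unwinding the definitions. Note that the paper itself states this lemma without proof, so there is nothing to compare against; your write-up supplies exactly the routine verification the paper omits.
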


Generally speaking, this allows arguments which leverage low-dimensional results to prove higher dimensional ones, because by Lemma \ref{isotropy} isotropy submanifolds provide natural symplectic submanifolds which themselves have Hamiltonian torus actions. Moreover, by Lemma \ref{fixedintersection} the fixed point sets of isotropy submanifolds give direct information about the fixed point set of $M$. 

Another fact which is used repeatedly throughout the article, referred to as reversing the circle action, is the following.
\begin{lemma} \label{reverse}
Suppose that $(M,\omega)$ is a closed, connected, symplectic manifold with a non-trivial Hamiltonian $S^1$-action,  and $N = M_{\max}$. Then there exists a Hamiltonian $S^1$-action on $M$ with $N=M_{\min}$.
\end{lemma}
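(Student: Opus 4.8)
The plan is to prove Lemma~\ref{reverse} by exhibiting an explicit new Hamiltonian $S^1$-action on $M$ obtained by inverting the given circle. Concretely, let $\rho: S^1 \to \mathrm{Symp}(M,\omega)$ be the original action with Hamiltonian $\mu$, and define $\tilde\rho(e^{i\theta}) = \rho(e^{-i\theta})$. This is again a smooth action by symplectomorphisms, and I would check directly from the definition that it is Hamiltonian with moment map $\tilde\mu = -\mu$: if $X$ denotes the generating vector field of $\rho$ then the generator of $\tilde\rho$ is $-X$, so $\omega(-X,\cdot) = -\omega(X,\cdot) = d\mu = -d(-\mu)$, which is exactly the Hamiltonian condition for $\tilde\mu = -\mu$. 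The fixed point set is unchanged, $M^{S^1}$ is the same for both actions, since a point is fixed by $\rho$ iff it is fixed by $\tilde\rho$.

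The remaining point is to identify the extremal submanifolds. Since $\tilde\mu = -\mu$, the function $\tilde\mu$ attains its minimum exactly where $\mu$ attains its maximum and vice versa; hence the connected component realizing $\min \tilde\mu$ is precisely $N = M_{\max}$ for the original action. Thus $N = \widetilde{M}_{\min}$, the minimal extremal submanifold for the new action, which is the claim. I would also remark that the action $\tilde\rho$ is non-trivial precisely when $\rho$ is, so the hypothesis is preserved, and that one could equally phrase this as pulling back $\rho$ along the automorphism $z \mapsto z^{-1}$ of $S^1$.

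I do not anticipate a serious obstacle here: the statement is essentially a bookkeeping observation, and the only thing requiring a line of care is the sign in the moment map convention, namely that the paper's defining relation $\omega(X_t,\cdot) = -d(\mu(t))$ forces $\tilde\mu = -\mu$ rather than $\tilde\mu = \mu$. One should double-check this against the convention fixed in Subsection~\ref{hamtoractions} so that ``minimum'' and ``maximum'' are assigned consistently, but once that is pinned down the argument is immediate.
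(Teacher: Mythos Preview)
Your proposal is correct and follows essentially the same approach as the paper: both invert the circle action to obtain generating vector field $-X$ and Hamiltonian $-\mu$, and note that this swaps $M_{\min}$ and $M_{\max}$. Your write-up is simply more explicit about the sign check and the identification of extremal submanifolds than the paper's two-line proof.
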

\begin{proof}
The desired circle action has generating vector field $-X$ and Hamiltonian $-H$.
One may check that by linearity $(M,\omega, -H, -X)$ satisfies the Hamiltonian equation. \end{proof}

The Atiyah-Guillemin-Sternberg convexity theorem will be used, we state it for convenience.

\begin{theorem} \cite{A,GS}
Let $(M,\omega)$ be a closed, connected, symplectic manifold with an effective, Hamiltonian $T^m$-action. Then $\mu(M)$ is equal to the convex hull of $\mu(M^{T^m})$, in particular it is a convex polytope. 
\end{theorem}

One well-known and easily-verified fact from convex geometry that we will use without further reference is that for a finite, convex polytope which is the convex hull of a finite set $A$ the set of vertices is contained in $A$. Therefore every vertex of the moment polytope is contained in the moment map image of the fixed point set.

Finally, we recall a result of the author and Panov  \cite[Theorem 9.10]{LP1} that will be applied in Section \ref{maincircle}.

\begin{theorem} \label{sixfixed}
Suppose that $(M,\omega)$ is a closed, connected, symplectic $6$-manifold with a Hamiltonian $S^1$-action such that $\dim(M_{\min})=\dim(M_{\max}) = 2$, and the genus of $M_{\min}$ is positive. Then, there exists a non-extremal fixed surface $F$ and a map $f: F \rightarrow M_{\min}$ such that $\deg(f) \in \{1,2\}$.
\end{theorem}
The important consequence for this article is that the existence of the map $f$ implies that $b_{1}(F) \geq b_{1}(M_{\min})$. In fact, such an inequality holds for any Betti number and any map of positive degree between closed, connected, orientable manifolds \cite[Lemma 1.2]{R}.
\subsection{Hamiltonian $T^2$-actions} \label{ttwo}
In this subsection of the preliminaries, we prove some facts about the preimages of vertices and edges of the moment polygon of a Hamiltonian $T^2$-action.

Before giving the first lemma, we recall some basic preliminaries about lattices and recall an elementary fact. Let $m>0$ be an integer and $V$ be a rank $m$ real vector space, a lattice $l \subset V$ is a subgroup isomorphic to $\mathbb{Z}^m$ which is a spanning set of $V$. Recall that the dual lattice $l^* \subset V^*$ is defined by $l^* := \{f \in V^* |  f(l) \subset \mathbb{Z}\}$. The following lemma about rank $2$ lattices, has been separated out from the proof of Lemma \ref{edgeiso} to avoid it being obscured by notations.

\begin{lemma} \label{latticelemma}
Let $V$ be a rank $2$ real vector space and $l \subset V$ a lattice. For each $u \in l^* \setminus \{0\}$,  $\ker(u)$ is generated by some $v \in l \setminus \{0\}$.
\end{lemma}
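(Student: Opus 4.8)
The plan is to argue that the kernel of a nonzero $u \in l^*$ is a rank $1$ subspace of $V$ which meets the lattice $l$ in a rank $1$ sublattice, and then take a generator of that sublattice. First I would note that since $u \neq 0$, the linear map $u \colon V \to \mathbb{R}$ is surjective, so $W := \ker(u)$ is a one-dimensional subspace of $V$. The key point is that $W \cap l$ is nontrivial: pick any basis $e_1, e_2$ of $l$; if $u(e_1) = 0$ we are done with $v = e_1$, and similarly for $e_2$, so assume $a := u(e_1)$ and $b := u(e_2)$ are both nonzero integers (they are integers precisely because $u \in l^*$). Then $b e_1 - a e_2$ is a nonzero element of $l$ lying in $W$, so $W \cap l$ is a nontrivial subgroup of $l \cong \mathbb{Z}^2$.

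Next I would invoke the structure of subgroups of free abelian groups: $W \cap l$, being a subgroup of $\mathbb{Z}^2$, is free abelian of rank at most $2$; but it is contained in the one-dimensional space $W$, so it has rank at most $1$, and we just showed it is nonzero, hence it has rank exactly $1$. Let $v$ be a generator of $W \cap l \cong \mathbb{Z}$. By construction $v \in l \setminus \{0\}$ and $u(v) = 0$. It remains to check that $v$ generates $\ker(u)$ as claimed, i.e. that $W = \ker(u)$ is the real span of $v$: this is immediate since $W$ is one-dimensional and $v$ is a nonzero element of $W$. (If the intended reading of "generated by $v$" is the stronger statement that every lattice point of $\ker(u)$ is an integer multiple of $v$, that is exactly the assertion that $v$ generates the rank $1$ lattice $W \cap l$, which is how $v$ was chosen.)

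I do not expect any real obstacle here; the lemma is genuinely elementary, and the only mild subtlety is making sure one uses $u \in l^*$ (so that $u(e_1), u(e_2) \in \mathbb{Z}$) when producing the explicit lattice point $b e_1 - a e_2 \in \ker(u)$, and then correctly citing that a subgroup of $\mathbb{Z}^2$ lying in a line is infinite cyclic. The rest is linear algebra over $\mathbb{R}$ combined with the classification of finitely generated abelian groups.
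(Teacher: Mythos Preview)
Your proof is correct and essentially matches the paper's approach: both pick an integral basis $e_1,e_2$ of $l$, write $u$ in integer coordinates $(p,q)$ (your $(a,b)$), and exhibit the explicit kernel vector $-qe_1+pe_2$ (equivalently your $be_1-ae_2$). The paper stops there, reading ``generates'' as real span, while you go a step further and extract a primitive generator of $\ker(u)\cap l$ via the structure of subgroups of $\mathbb{Z}^2$; this is a harmless addition (the paper passes to a primitive element separately in the subsequent application).
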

\begin{proof}
Let $e_1,e_2$ be an integral basis of $l$, since $l$ is a lattice $e_1,e_2$ is a basis of $V$. If $e_1^*,e_2^*$ is the corresponding dual basis of $V$, note that  $e_1^*,e_2^*$ is an integral basis of $l^*$. Writing $u = pe_1^*+qe_2^*$ for some $p,q \in \mathbb{Z}$, $v = -qe_1+pe_2$ generates $\ker(u) \subset V$. 
\end{proof}
These preliminaries will be applied to the following lattice. Let $T^m = S^1 \times \ldots \times S^1$ be a compact torus. The Lie algebra $\mathfrak{t}$ is defined to be the tangent space of $T^m$ at the identity. The weight lattice $l \subset \mathfrak{t}$ is defined as the kernel of the exponential $\exp: \mathfrak{t} \rightarrow T$. 

\begin{lemma} \label{edgeiso}
Let $(M,\omega)$ be a  closed, connected, symplectic $2n$-manifold with a Hamiltonian $T^2$-action, $E$ an edge of the moment polygon $\mu(M)$. Then there exists a linear subcircle $S_{E} \subset T^2$ such that the following properties hold: \begin{enumerate} 

\item $\mu^{-1}(E)$ is a connected component of $M^{S_{E}}$ and is $T^2$-invariant.

\item There is a Hamiltonian $S^1$-action on $(M,\omega)$, with minimal fixed submanifold equal to $\mu^{-1}(E)$.

\item $\mu^{-1}(E)$ admits a non-trivial Hamiltonian $S^1$-action.  \end{enumerate}
\end{lemma}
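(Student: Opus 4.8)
The plan is to produce the subcircle $S_E$ directly from the combinatorics of the moment polygon. An edge $E$ of $\mu(M)$ lies on an affine line whose direction is spanned by a primitive vector in the weight lattice; equivalently, $E$ is contained in a level set of an affine functional of the form $p \mapsto \langle u, p\rangle$ for some primitive $u \in l^* \setminus \{0\}$ in the dual weight lattice. By Lemma \ref{latticelemma}, $\ker(u) \subset \mathfrak{t}$ is generated by a primitive $v \in l \setminus \{0\}$, and I would let $S_E = \exp(\mathbb{R} v) \subset T^2$ be the corresponding linear subcircle. The point of this choice is that the component of $\mu$ in the direction $v$ — call it $H_v = \langle \mu(\cdot), v\rangle$ up to the obvious pairing — is constant on $\mu^{-1}(E)$, since $v \in \ker(u)$; so $\mu^{-1}(E)$ lies in a single level set of the moment map of the $S_E$-action.

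First I would identify $\mu^{-1}(E)$ with a fixed-point component of $S_E$. By the convexity theorem, $E$ is the image under $\mu$ of a connected $T^2$-invariant symplectic submanifold, and more precisely $\mu^{-1}(E)$ is a face in Atiyah's normal-form picture: near $\mu^{-1}(E)$ the $S_E$-weights transverse to it are all nonzero (they are exactly the directions in which $\mu$ moves away from the line containing $E$), so $\mu^{-1}(E)$ is open and closed in $M^{S_E}$, giving part (1); $T^2$-invariance follows from Lemma \ref{isoprop}(1) applied to $H = S_E$, or simply from the fact that $T^2$ is abelian. For part (2), observe that $E$ is an edge of the polygon, so it lies on the boundary; choosing the sign of $v$ (equivalently replacing $v$ by $-v$) so that $E$ realises the minimum of $\langle u, \cdot \rangle$ is not what we want — rather, I want $H_v$ to attain its minimum exactly along $\mu^{-1}(E)$. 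Since $H_v$ is constant on $\mu^{-1}(E)$ and the transverse $S_E$-weights all point into the polygon from $E$, a suitable orientation of $v$ makes all these weights positive, so $\mu^{-1}(E) = M_{\min}$ for the $S_E$-action (after applying Lemma \ref{reverse} if necessary). This uses the standard local structure of the moment map at the preimage of a face, which I would cite from Atiyah or Guillemin–Sternberg.

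For part (3), I would invoke Lemma \ref{isotropy} with $H = S_E$: the component $N = \mu^{-1}(E)$ is a closed symplectic submanifold carrying a Hamiltonian $T^2/S_E$-action with moment map the appropriate projection of $\mu|_N$. Since $T^2/S_E \cong S^1$, this is a Hamiltonian $S^1$-action on $\mu^{-1}(E)$. It remains to check it is non-trivial, i.e. that $T^2$ does not act on $\mu^{-1}(E)$ through $S_E$ alone. If it did, then $\mu|_N$ would be constant in the complementary direction, forcing $\mu(N)$ to be a single point rather than an edge — contradicting that $E$ is one-dimensional. Hence the induced $S^1$-action on $\mu^{-1}(E)$ is non-trivial, which is part (3).

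The main obstacle I anticipate is part (1): verifying cleanly that $\mu^{-1}(E)$ is an entire connected component of $M^{S_E}$ and not a proper subset of one. This requires knowing the precise local model of $\mu$ near $\mu^{-1}(E)$ — that the normal directions to $\mu^{-1}(E)$ in $M$ carry $S_E$-weights which are all nonzero and which, together with $u$, describe how $\mu$ opens up into the two-dimensional cone at $E$. Everything else (choosing signs for (2), and the dimension-counting argument for (3)) is then routine once this local picture is in hand; the lattice bookkeeping has already been isolated into Lemma \ref{latticelemma}.
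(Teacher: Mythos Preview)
Your overall strategy matches the paper's: pick the subcircle $S_E$ generated by a primitive $v \in l$ annihilating the edge direction, use the component $H_v = \langle \mu, v\rangle$ of the moment map, and for (3) pass to the residual circle action. One notational slip: in your second sentence you declare $u \in l^*$ to be the functional for which $E$ is a level set, but everywhere else you use $u$ as the edge \emph{direction} in $\mathfrak{t}^*$ (so that $v \in \ker(u) \subset \mathfrak{t}$); the ``equivalently'' sentence is simply wrong as written, though the error does not propagate.

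Where you diverge is on part (1), which you rightly flag as the obstacle. You plan to verify that the transverse $S_E$-weights are nonzero by invoking a local normal form along all of $\mu^{-1}(E)$. The paper bypasses this with a short global argument: since $E$ lies on the boundary of the convex polygon $\mu(M)$, any linear functional on $\mathfrak{t}^*$ constant on $E$ attains an extremum there over the whole polygon; hence $\mu^{-1}(E)$ is an extremal level set of $H_v$, so it is fixed by $S_E$, connected (level sets of $H_v$ are), and a full component of $M^{S_E}$ because every other point has a different $H_v$-value. This disposes of (1) and (2) simultaneously, using a local model only at a single vertex (just to see that the edge direction lies in $l^*$), and removes exactly the difficulty you anticipated.
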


\begin{proof}
Let $l \subset \mathfrak{t}$ be the weight lattice,  and  $l^* \subset \mathfrak{t}^*$ the dual lattice. Let $v$ be a vertex of $E$. By the Atiyah-Guillemin-Sternberg convexity theorem  $\mu^{-1}(v)$ contains a fixed point say $p \in M^{T^2}$. By the equivariant symplectic neighborhood theorem  \cite[Theorem A.1]{Ka}, there exists a neighborhood of $p$ such that the action is equivariantly symplectomorphic to the action on the standard symplectic vector space given by a linear embedding $s: T^2 \rightarrow T^{n}$, let $\pi_i : T^{n} \rightarrow S^1$ be the projection to the $i$'th coordinate. Then $w_i := (\pi_i \circ s)_{*} \in l^*$ is the $i'th$ weight at $p$, moreover in these local coordinates $$\mu(z_{1},\ldots,z_{n}) = w_i |z_i|^2 +c.$$ Moreover, by adding a constant to $\mu$ we may assume $c=0$.

It follows that  locally the image of $\mu$ is the cone over $\{ w_i | i=1, \ldots, n\}$, i.e. vectors of the form $\sum x_i w_i$, such that $x_i \geq 0$. Since $E$ is on the boundary of the polygon $\mu(M)$, it follows that  for at least one $i$, the direction of $E$ centered at $\mu(p)$ is $w_i \in l^*$, fix $w_i$ to be such a weight. By Lemma \ref{latticelemma} the kernel of the map $w_i : \mathfrak{t} \rightarrow \mathbb{R}$ is generated by an element $u \in l \setminus \{0\}$. Let $t_{e}$ be a primitive element such that $kt_{e} = u$, $k \in \mathbb{Z}$. Denote by $X_e $ the corresponding invariant vector field on $M$. Since $t_e \in l$ is primitive,  $X_e$ generates the action of the corresponding linear subgroup $ S_{E} \subset T^2$. 

By applying the Hamiltonian equation of the $T^2$-action, $d (\langle \mu,t_{e} \rangle) = \omega(X_{e},\cdot)$, so the Hamiltonian of the action of the linear subgroup $S_{E}$ is $H(p) = \langle \mu(p), t_{e} \rangle$.  Moreover, since by definition $\langle w_i, t_{e} \rangle = 0$, it holds that: $$H^{-1}(0) =  \{ p \in M | \langle \mu(p), t_{e} \rangle = 0   \} = \{ p \in M | \mu(p)  = kw_e, k \in \mathbb{R}   \}  =  \mu^{-1}(E).$$

Note that if a linear function is constant on the edge of a convex polygon, then the edge must be an extremum for the restriction of the function to the polygon. Therefore, $\mu^{-1}(E)$ is an extremum of $H$ and therefore is fixed pointwise by $S_{E}$, i.e. $\mu^{-1}(E) \subset M^{S_{E}}$. As  $\mu^{-1}(E)$ is subset of $M^{S_{E}}$ consisting of a level set of $H$ it is connected, moreover it is a connected component of  $M^{S_{E}}$  because any other point in $M$ has a different value of $H$, therefore cannot be connected by a path in  $M^{S_{E}}$.

Finally to exhibit a non-trivial Hamiltonian $S^{1}$-action on $\mu^{-1}(E)$ let $S'$ be any linear circle subgroup whose tangent space at the identity is linearly independent from that of $S_{E}$, since by Lemma \ref{isoprop} each connected component of $M^{S_{E}}$ is $T^2$-invariant, $\mu^{-1}(E)$ is in particular $S'$-invariant. Finally, since the action of $S'$ has a non-zero weight at $p$ so it is non-trivial.  
\end{proof}
The following lemma is similar but simpler, therefore its proof has been abbreviated significantly. Several of the technical details can be found in the proof of Lemma \ref{edgeiso}.
\begin{lemma} \label{vertexfixed}

Let $(M,\omega)$ be closed, connected, symplectic $2n$-manifold with an effective Hamiltonian $T^2$-action, $v$ a vertex of the moment polygon $\mu(M)$. Then there is a Hamiltonian $S^1$-action on $(M,\omega)$ associated to linear subcircle $S \subset T^2$, which has extremal fixed submanifold equal to $\mu^{-1}(v)$.
\end{lemma}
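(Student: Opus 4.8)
The plan is to run the proof of Lemma~\ref{edgeiso} almost verbatim, replacing the observation ``a linear function constant on an edge is extremal on the polygon'' by its sharper vertex analogue: a linear function can be made to attain a \emph{strict} minimum on the polygon at a single vertex. Write $P := \mu(M) \subset \mathfrak{t}^*$ for the moment polygon and identify linear functionals on $\mathfrak{t}^*$ with elements $t \in \mathfrak{t}$ acting by $x \mapsto \langle x, t\rangle$. The first step is the elementary polygon geometry: since $v$ is a vertex of $P$, the set $C_v := \{\, t \in \mathfrak{t} : \langle \cdot, t\rangle|_P \text{ attains its minimum exactly at } v \,\}$ is the interior of the normal cone of $P$ at $v$, which is a non-empty open subset of $\mathfrak{t}$ (when $P$ has non-empty interior, $C_v$ is the interior of the full-dimensional pointed cone spanned by the outward conormals of the two edges at $v$; in the degenerate cases where $P$ is a segment or a point the same conclusion holds with an appropriate open half-space). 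Being open and non-empty, $C_v$ meets the weight lattice $l \subset \mathfrak{t}$; choose a primitive $t_v \in C_v \cap l$, let $S = \exp(\mathbb{R}\, t_v) \subset T^2$, which is a genuine subcircle because $t_v$ is primitive, and let $X_v$ be the corresponding invariant vector field on $M$.

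Next, exactly as in the proof of Lemma~\ref{edgeiso} (or directly by Lemma~\ref{isotropy} applied to the restriction of the $T^2$-moment map), applying the Hamiltonian equation of the $T^2$-action to $t_v$ shows the $S$-action is Hamiltonian with Hamiltonian $H := \langle \mu, t_v\rangle : M \to \mathbb{R}$. Since $\mu(M) = P$ and $t_v \in C_v$, the function $\langle \cdot, t_v\rangle$ on $P$ is minimized only at the point $v$, so $H^{-1}(\min_M H) = \mu^{-1}(v)$. By the Atiyah--Guillemin--Sternberg convexity theorem the fibres of $\mu$ are connected, hence $\mu^{-1}(v)$ is connected; and the minimal level set of the Hamiltonian of a Hamiltonian circle action is a single connected component of the fixed point set (it is critical for $H$, and $H$ is constant on each component of $M^{S}$ since $dH = \omega(X_v,\cdot)$ vanishes there). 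Therefore $\mu^{-1}(v) = M_{\min}$ for the $S$-action, an extremal fixed submanifold; if one wants it realized as a maximum instead, reverse the circle via Lemma~\ref{reverse}.

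The only point not already contained in Lemma~\ref{edgeiso} is the lattice argument in the first paragraph --- that the normal cone at a vertex is open of full dimension and therefore certainly contains a primitive weight-lattice vector --- and I expect no real difficulty there. Everything else is a direct repetition of the computations in Lemma~\ref{edgeiso}, with ``the edge $E$'' replaced by ``the vertex $v$'' and ``extremum of $H$'' upgraded to ``strict minimum of $H$ at the single point $v$''. Accordingly I would write the proof in the abbreviated form the statement anticipates, citing Lemma~\ref{edgeiso} for the Hamiltonian-equation computation and for the verification that $\mu^{-1}(v)$ is a connected component of $M^{S}$.
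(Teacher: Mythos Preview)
Your proposal is correct and follows essentially the same strategy as the paper: pick a primitive lattice direction $t_v \in l \subset \mathfrak{t}$ so that the linear functional $\langle\cdot,t_v\rangle$ on the moment polygon is strictly extremal at $v$, and take $S=\exp(\mathbb{R}\,t_v)$, whence $H=\langle\mu,t_v\rangle$ has $M_{\min}=\mu^{-1}(v)$. The only difference is in how the direction is produced: the paper passes through the local equivariant Darboux model at a fixed point over $v$ and the edge-weights $w_1,w_2$, whereas you argue directly from the open normal cone of $P$ at $v$; your formulation is cleaner and in fact avoids a small slip in the paper's abbreviated construction (a primitive $t'\in\ker(w_1+w_2)$ satisfies $\langle w_1,t'\rangle=-\langle w_2,t'\rangle$, so $v$ is \emph{not} extremal for $\langle\cdot,t'\rangle$---one really wants $t'$ in the interior of the dual cone, exactly as you take it).
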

\begin{proof}
Let $E_1,E_{2}$ be the edges of $\mu(M)$ containing $v$. By the Atiyah-Guillemin-Sternberg convexity theorem, there exists some $p \in \mu^{-1}(v)$ that is fixed by $T^2$.  Consider the fixed point representation and the local form of the action on a neighborhood of $p$ as in the beginning of the proof of Lemma \ref{edgeiso}. As in the previous proof, the image locally, up to translations is a cone, over the weights of the action, therefore two of the weights $w_1,w_2 \in l^*$ correspond to the edges up to a constant. Taking $w' \in l^*$ to be any primitive element in the interior of the moment polygon (for example $w_1+w_2$) and taking $t' \in l$ a primitive element in the kernel provided by Lemma \ref{latticelemma}, the circle action associated to $t'$ has the desired property.
 \end{proof}

\section{Proof of the main results for $S^1$-actions} \label{maincircle}
The section begins by showing that for a closed symplectic manifold with a Hamiltonian $S^1$-action, certain values of the dimension of the  extremal submanifolds immediately imply unimodality inequalities in the Betti numbers  of $M$. The first main goal is to prove Lemma \ref{fourmin} which is central to all the following proofs of the paper and then prove the first two main results of the paper. 

Let $(M,\omega)$ be a closed symplectic manifold with a non-trivial Hamiltonian $S^1$-action with Hamiltonian $H: M \rightarrow \mathbb{R}$. Then $$ M^{S^1} := \{p \in M | z.p=p \; \forall z \in S^1\},$$ is a union of symplectic submanifolds and a connected component of $M^{S^1}$ is called a fixed component.  It follows from the Hamiltonian equation that $H$ is constant on fixed components. A fixed component $F \subset M^{S^1}$ is called extremal if it is equal to one of the subsets on which $H$ attains its minimum or maximum, which are denoted $M_{\min},M_{\max} \subset M^{S^1} \subset M$. In the course of the proof of the convexity theorem, Atiyah-Guillemin-Sternberg showed that $M_{\min},M_{\max}$ are fixed components. In Subsection \ref{hamtoractions} a key quantity $\lambda_F$ for fixed components $F$ was defined.  Since $\lambda_{F}$ is half the Morse-Bott index of $H$ along $F$ \cite[Page 7]{T}, the following fact follows: For a fixed component $F \subset M^{S^1}$ $$\lambda_F=0 \iff F = M_{\min}.$$  This will be used in the proofs throughout this section.

\begin{lemma}\label{unimod}
Suppose that $(M,\omega)$ is a $2n$-dimensional closed, connected, symplectic manifold with a non-trivial Hamiltonian $S^1$-action, and has an extremal fixed component $F_{0}$ of dimension $2k$. Then the inequalities $b_{1}(M) \leq b_{2k-1}(M)$ and  $b_{1}(M) \leq b_{2(n-k)+1}(M)$ hold.
\end{lemma}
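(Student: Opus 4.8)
The plan is to deduce both inequalities from Kirwan's localization formula (Theorem~\ref{locbet}) together with Poincar\'e duality applied to the extremal component $F_0$, using Lemma~\ref{reverse} to move $F_0$ to whichever of $M_{\min}$, $M_{\max}$ is convenient in each case.

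The first point to record is that $b_1(M) = b_1(F_0)$. If $F_0 = M_{\min}$ this is immediate from Theorem~\ref{locbet}: in $b_1(M) = \sum_F b_{1-2\lambda_F}(F)$ every summand with $F \neq M_{\min}$ has $\lambda_F \geq 1$, hence lies in a negative degree and vanishes, so $b_1(M) = b_1(M_{\min})$. If instead $F_0 = M_{\max}$, apply Lemma~\ref{reverse} to reduce to the previous case, noting that reversing the circle action changes neither $M$ nor its Betti numbers.

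For $b_1(M) \leq b_{2k-1}(M)$ I would work with the Hamiltonian $S^1$-action for which $F_0$ is the minimum (the given action, or its reversal via Lemma~\ref{reverse} if necessary); then $\lambda_{F_0} = 0$. Discarding all but the $F = F_0$ summand in Theorem~\ref{locbet}, all summands being non-negative, gives $b_{2k-1}(M) \geq b_{2k-1}(F_0)$. Since $F_0$ is a closed symplectic submanifold of real dimension $2k$ it is orientable, so Poincar\'e duality yields $b_{2k-1}(F_0) = b_1(F_0)$, which equals $b_1(M)$ by the preceding paragraph; this is the first inequality.

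For $b_1(M) \leq b_{2(n-k)+1}(M)$ I would instead use the action for which $F_0$ is the maximum (again passing to the reversed action if needed). For it, $\lambda_{F_0}$ is half the Morse--Bott index of $H$ along $F_0$, and at a maximum this index is the full normal codimension $2n - 2k$, so $\lambda_{F_0} = n-k$; equivalently, the $n$ weights at $F_0$ consist of $k$ zeros and $n-k$ negative ones. Theorem~\ref{locbet} then gives $b_{2(n-k)+1}(M) \geq b_{2(n-k)+1 - 2(n-k)}(F_0) = b_1(F_0) = b_1(M)$, as required. There is no substantial obstacle here; the only points needing care are applying Lemma~\ref{reverse} to place $F_0$ at the correct extreme in each of the two inequalities, and observing that a symplectic submanifold is orientable so that Poincar\'e duality may be invoked to identify $b_{2k-1}(F_0)$ with $b_1(F_0)$.
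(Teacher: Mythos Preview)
Your proof is correct and follows essentially the same approach as the paper: establish $b_1(M)=b_1(F_0)$ via Theorem~\ref{locbet}, then combine localization with Poincar\'e duality on $F_0$ to get the first inequality. The only difference is in the second inequality: you reverse the action so that $F_0=M_{\max}$ and apply localization directly with $\lambda_{F_0}=n-k$, whereas the paper simply invokes Poincar\'e duality on $M$ to obtain $b_{2k-1}(M)=b_{2(n-k)+1}(M)$ from the first inequality, which is marginally shorter.
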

\begin{proof}
By possibly reversing the circle action by  Lemma \ref{reverse}, we may assume that $F_0 = M_{\min}$. By Theorem \ref{locbet}, $$b_{1}(M) = \sum_{F \subset M^{S^1}} b_{1-2\lambda_F} (F)= b_{1}(M_{\min}) .$$

By applying Poincar\'{e} duality to $M_{\min}$ $b_{1}(M_{\min}) =  b_{2k-1}(M_{\min})$, therefore by  Theorem \ref{locbet} $$ b_{2k-1}(M) =  \sum_{F \subset M^{S^1}} b_{2k-1 -2\lambda_F} (F) = b_{2k-1}(M_{\min}) + \sum_{F \subset M^{S^1}, \; F \neq M_{\min}} b_{2k-1 -2\lambda_F} (F).$$ Therefore, as was shown above $b_{2k-1}(M_{\min}) = b_{1}(M)$, it follows that $b_{1}(M) \leq b_{2k-1}(M)$. By Poincar\'{e} duality applied to $M$, $b_{2k-1}(M) = b_{2n-(2k-1)}(M)$ proving the second inequality.
\end{proof}

Next we prove a lemma that will be central to the proofs of all the main results of the article.

\begin{lemma} \label{fourmin}
Suppose that $(M,\omega)$ is a $2n$-dimensional, closed, connected, symplectic manifold with a non-trivial Hamiltonian $S^1$-action, and has an extremal fixed component $F_{0}$ of dimension $0$, $4$ or $2n-2$. Then $b_{1}(M) \leq b_{3}(M)$.
\end{lemma}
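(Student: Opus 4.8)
The plan is to deduce the statement directly from Lemma \ref{unimod}, splitting into three cases according to the allowed values of $\dim F_0$. In each case the desired inequality $b_1(M) \le b_3(M)$ will drop out of one of the two inequalities furnished by Lemma \ref{unimod} for a suitable choice of the parameter $k$.

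First I would treat $\dim F_0 = 4$: apply Lemma \ref{unimod} with $2k = 4$, i.e. $k = 2$, and read off the first inequality $b_1(M) \le b_{2k-1}(M) = b_3(M)$. Symmetrically, for $\dim F_0 = 2n-2$ I would apply Lemma \ref{unimod} with $2k = 2n-2$, i.e. $k = n-1$, and use the second inequality $b_1(M) \le b_{2(n-k)+1}(M) = b_{3}(M)$. Finally, for $\dim F_0 = 0$ I would again invoke Lemma \ref{unimod}, now with $k = 0$: its conclusion $b_1(M) \le b_{2k-1}(M) = b_{-1}(M)$ forces $b_1(M) = 0$, so the inequality holds trivially. (Equivalently, after reversing the action so that $F_0 = M_{\min}$ by Lemma \ref{reverse}, Theorem \ref{locbet} in degree $1$ collapses to $b_1(M) = b_1(M_{\min}) = 0$ since $M_{\min}$ is a point.) Since by hypothesis $\dim F_0 \in \{0, 4, 2n-2\}$, one of these three cases always applies.

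I do not expect any genuine obstacle here: Lemma \ref{fourmin} is an immediate corollary of Lemma \ref{unimod}, which is itself just Kirwan's localization of Betti numbers (Theorem \ref{locbet}) combined with Poincar\'{e} duality on $M$ and on $M_{\min}$. The only points requiring a little care are the index book-keeping — using the \emph{first} inequality of Lemma \ref{unimod} when $\dim F_0 = 4$ and the \emph{second} when $\dim F_0 = 2n-2$ — and the degenerate case $\dim F_0 = 0$, where the relevant Betti number $b_{-1}$ simply vanishes.
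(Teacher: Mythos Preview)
Your proposal is correct and follows essentially the same route as the paper: the cases $\dim F_0 = 4$ and $\dim F_0 = 2n-2$ are handled via the first and second inequalities of Lemma \ref{unimod} respectively, and the case $\dim F_0 = 0$ is handled by observing (via Theorem \ref{locbet} after reversing) that $b_1(M) = b_1(M_{\min}) = 0$. The only cosmetic difference is that the paper treats the isolated case directly from Theorem \ref{locbet} rather than pushing Lemma \ref{unimod} to the degenerate value $k=0$, but you already note this equivalent argument parenthetically.
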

\begin{proof}
By possibly reversing the circle action by  Lemma \ref{reverse} we can assume $F_0=M_{\min}$. Firstly, if $F_{0}$ is isolated, by Theorem \ref{locbet}  $$b_{1}(M) =  \sum_{F \subset M^{S^1}} b_{1-2\lambda_F} (F) = b_{1}(M_{\min}) = 0$$ so the statement holds. Next, assume $\dim(F_0) = 4$ then the result holds by the first inequality of Lemma \ref{unimod}. When $\dim(F_0) = 2n-2$, the conclusion is a consequence of the second inequality of Lemma \ref{unimod}.
\end{proof}

\begin{theorem} \label{six}
Let $(M,\omega)$ be a closed, connected, symplectic $6$-manifold with a non-trivial Hamiltonian $S^1$-action. Then $b_{1}(M) \leq b_{3}(M)$.
\end{theorem}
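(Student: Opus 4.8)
The plan is to reduce to the case where both extremal submanifolds are surfaces of positive genus and then produce a non-extremal fixed surface of at least that genus, exactly as sketched after Theorem~\ref{thm:mainA} in the introduction. First, by Lemma~\ref{fourmin}, the only case not already settled is when $\dim(M_{\min}) = \dim(M_{\max}) = 2$: the possible dimensions of an extremal component in a $6$-manifold are $0$, $2$, $4$ and $6$, the value $6$ is excluded since the action is non-trivial, and dimensions $0$ and $4 = 2n-2$ are handled by Lemma~\ref{fourmin}. So from now on assume $M_{\min}$ and $M_{\max}$ are closed orientable surfaces.

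Next I would apply Theorem~\ref{locbet} in degree $1$: since $\lambda_F = 0$ only for $F = M_{\min}$ and the only other contribution $b_{1-2\lambda_F}(F)$ that can be nonzero requires $\lambda_F = 0$, we get $b_1(M) = b_1(M_{\min})$, and by reversing the action (Lemma~\ref{reverse}) likewise $b_1(M) = b_1(M_{\max})$. Hence $g(M_{\min}) = g(M_{\max}) =: g$. If $g = 0$ then $b_1(M) = 0$ and the inequality $b_1(M) \le b_3(M)$ is trivial. So assume $g > 0$.

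The substantive step is the case $g > 0$, where I would invoke \cite[Theorem 9.10]{LP1}: for a Hamiltonian $S^1$-action on a closed symplectic $6$-manifold with $\dim M_{\min} = \dim M_{\max} = 2$ and $g(M_{\min}) = g(M_{\max}) = g > 0$, there exists a non-extremal fixed component $F$ which is a surface with $g(F) \ge g$. Granting this, $F$ has $\lambda_F \notin \{0,3\}$, so $\lambda_F \in \{1,2\}$; in either case $2\lambda_F - 1$ lies in $\{1,3\}$ and by Poincaré duality on the surface $F$ we have $b_{2\lambda_F - 1}(F) = b_1(F) = 2g(F) \ge 2g = b_1(M)$. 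Applying Theorem~\ref{locbet} in degree $3$,
\[
b_3(M) = \sum_{F' \subset M^{S^1}} b_{3 - 2\lambda_{F'}}(F') \ge b_{3 - 2\lambda_F}(F) = b_1(M),
\]
where the inequality drops all other (nonnegative) terms and uses $3 - 2\lambda_F = \pm 1$ together with Poincaré duality on $F$. This gives $b_1(M) \le b_3(M)$, completing the proof.

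The main obstacle is entirely external: the argument rests on the structural input \cite[Theorem 9.10]{LP1} guaranteeing a non-extremal fixed surface of genus at least $g$, and on correctly checking that its half-index $\lambda_F$ forces the relevant Betti number of $F$ to land in degree $1$ or $3$ after Poincaré duality — the remaining bookkeeping with Theorem~\ref{locbet} is routine.
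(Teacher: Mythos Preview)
Your argument is correct and follows the paper's proof essentially verbatim: reduce via Lemma~\ref{fourmin} to the case of extremal surfaces, identify $b_1(M)=b_1(M_{\min})=b_1(M_{\max})$ via Theorem~\ref{locbet}, and for positive genus invoke \cite[Theorem~9.10]{LP1} to obtain a non-extremal fixed surface $F$ with $b_1(F)\ge b_1(M)$, then read off $b_3(M)\ge b_1(F)$ from Theorem~\ref{locbet}. One small correction: a non-extremal fixed \emph{surface} in a $6$-manifold has exactly two nonzero weights, so $\lambda_F=2$ would force $F$ to be a local (hence the global) maximum; thus necessarily $\lambda_F=1$, and your case $\lambda_F=2$ never arises --- which is just as well, since Poincar\'e duality on a surface gives $b_{3-2\cdot 2}(F)=b_{-1}(F)=b_3(F)=0$, not $b_1(F)$.
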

\begin{proof}   By Lemma \ref{fourmin} it is necessary only to prove the statement for the case $\dim(M_{\min}) = \dim(M_{\max}) = 2$.  By Theorem \ref{locbet}, $b_{1}(M_{\min}) = b_{1}(M_{\max}) = b_{1}(M)$ hence $M_{\min}$ and $M_{\max}$ have the same genus, which we denote $g(M_{\min}) =g(M_{\max})$. If  $g(M_{\min}) = 0$ then $b_{1}(M) = 0$ by Theorem \ref{locbet} so the desired statement is proved. If $g(M_{\min})>0$, by Theorem \ref{sixfixed} there exists a non-extremal fixed surface $F$ with a map $f: F \rightarrow M_{\min}$ of degree  $\deg(f) \in \{1,2\}$. In particular, $f_{*}:  H_{1}(F,\mathbb{R}) \rightarrow H_{1}(M_{\min},\mathbb{R})$ is surjective and it follows that $b_{1}(F) \geq b_{1}(M_{\min}) = b_{1}(M_{\max}) = b_{1}(M)$. Since $F$ is non-extremal and $2$-dimensional by a dimension count $\lambda_{F}=1$. By Theorem \ref{locbet} it holds that  $b_{3}(M) \geq b_{1}(F)$. 
\end{proof}

In Theorem \ref{eightmain} below, we prove an analogous result in dimension $8$ under an additional assumption on the action. Firstly, we prove an arithmetic lemma which will be used in the proof. Let $k>1$ be an integer and $\pi : \mathbb{Z} \rightarrow  \mathbb{Z}/k\mathbb{Z}$ the quotient map. Let $A_1,A_2 \subset \mathbb{Z}$ be finite subsets, with indicator functions $m_{A_1},m_{A_{2}}: \mathbb{Z} \rightarrow \{0,1\}$. Let $\tilde{A}_1 ,\tilde{A}_2$ be the multisets consisting of elements of  $\mathbb{Z}/k\mathbb{Z}$ with indicator functions $m_{\tilde{A}_i} :   \mathbb{Z}/k\mathbb{Z} \rightarrow \mathbb{Z}_{\geq 0}$, defined by $m_{\tilde{A}_i}(\alpha) = \sum_{n \in \pi^{-1}(\alpha)} m_{A_{i}}(n) . $  The notation $A_1 = A_2 \mod k$ means that $\tilde{A}_1$ and $\tilde{A}_2$ are equal as multisets. 

\begin{lemma} \label{numberprop}
There does not exist three pairwise coprime integers $a_{1},a_{2},a_{3}$ such that $1 <a_1<a_2<a_3 $ and $\{a_1,a_2,a_3\} = \{-a_1,-a_2,-a_3\} \; \mod a_i,$ for $i=1,2,3$.
\end{lemma}
\begin{proof}
 By contradiction, let $1 < a_1 < a_2 < a_3$ be pairwise coprime with $\{a_1,a_2,a_3\} = \{-a_1,-a_2,-a_3\} \mod a_i$ for every $1 \le i \le 3$. Then $\{a_1, a_2\} = \{-a_1, -a_2\} \mod a_3$, hence either $a_1 = a_3-a_2$ or $a_1=a_3-a_1$. In the latter case, $a_3$ divides $2$ contradicting $3 < a_3$. Hence we have $a_1 = a_3-a_2$. Now, we have $\{a_1, a_3\} = \{-a_1, -a_3\} \mod a_2$, and so $\{a_3, a_3\} = \{-a_3, -a_3\} \mod a_2$ and in particular $a_3 = -a_3 \mod a_2$, implying that $a_2$ divides $2$, contradicting $a_2>2$.
\end{proof}

We are now ready to prove the main result about Hamiltonian $S^1$-actions on $8$-manifolds.

\begin{theorem} \label{eightmain} Let $(M,\omega)$ be a closed, connected, symplectic $8$-manifold with an effective Hamiltonian $S^1$-action such that all the non-zero weights along $M_{\min}$ and $M_{\max}$ have modulus greater than $1$. Then $b_{1}(M) \leq b_{3}(M)$.
\end{theorem}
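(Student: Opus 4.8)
The plan is to follow the strategy outlined in the introduction. By Lemma \ref{fourmin}, the only case that requires work is when every extremal component has dimension $2$ or $6$. If either $M_{\min}$ or $M_{\max}$ has a component of dimension $6$, we are done by Lemma \ref{fourmin}, so I may assume $\dim(M_{\min}) = \dim(M_{\max}) = 2$. As in the proof of Theorem \ref{six}, Theorem \ref{locbet} gives $b_1(M) = b_1(M_{\min}) = b_1(M_{\max})$, so $M_{\min}$ and $M_{\max}$ are surfaces of equal genus $g$; if $g = 0$ then $b_1(M) = 0$ and there is nothing to prove, so assume $g > 0$. Invoking \cite[Theorem 9.10]{LP1} as before, there is a non-extremal fixed component $F$ dominating $M_{\min}$; if $\dim F \geq 4$ then a Poincar\'e-duality-and-localization argument of the type in Lemma \ref{unimod} should already yield $b_1(M) \leq b_3(M)$, so the genuinely hard case is when all such $F$ are surfaces and (again by \cite[Theorem 9.10]{LP1}) there are exactly three $4$-dimensional isotropy submanifolds $N_1, N_2, N_3$ containing $M_{\min}$, each of which (by Lemma \ref{isotropy}) carries a Hamiltonian $S^1$-action whose minimum is $M_{\min}$ and whose maximum is a fixed surface of genus $\geq g$; matching genera forces each $N_j$ to also contain $M_{\max}$.

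The core of the argument is then arithmetic. Fix a point $p \in M_{\min}$ and $q \in M_{\max}$; write the nonzero weights of the $S^1$-action on the normal bundles as $\{a_1, a_2, a_3\}$ at $p$ and $\{-b_1, -b_2, -b_3\}$ at $q$ (all $b_i > 0$ since $\lambda_{M_{\max}} = n = 4$ and one weight is tangential and zero), all of modulus $> 1$ by hypothesis. Each isotropy submanifold $N_j$ is a connected component of $M^{\mathbb{Z}_{m_j}}$ for some $m_j > 1$; inside $N_j$ the quotient circle acts, and the weights of the normal bundle of $N_j$ in $M$ at $p$ (resp.\ at $q$) are precisely the $a_i$ (resp.\ $-b_i$) divisible by $m_j$ — more precisely \cite[Lemma 2.6]{T} (the form used by Tolman) gives that, after reordering, $a_i \equiv b_i \pmod{m_j}$ for the weights transverse to $N_j$, together with the divisibility $m_j \mid a_k$ for the weight $a_k$ tangent to $N_j$. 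Assembling the data from $N_1, N_2, N_3$ and using that there are three such submanifolds pairing up the three transverse weights in all the ways that occur, one obtains a rigid system of congruences among $a_1, a_2, a_3, b_1, b_2, b_3$ and the moduli $m_1, m_2, m_3 \geq 2$. The claim, to be proved as Proposition \ref{numberprop}, is that such a system has no solution with all $|a_i|, |b_i| > 1$; this is where I would spend the real effort, likely by a case analysis on the pattern of which $m_j$ divides which weight, reducing each branch to an elementary contradiction in congruences (for instance, that some $a_i$ would be forced to equal $\pm 1$, or that a weight would have to be both positive and negative).

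The one subtlety I want to flag is that Theorem 9.10 of \cite{LP1} must be applied carefully: it produces the dominating surface $F$ for a $6$-dimensional symplectic manifold with the given properties, so I should apply it either directly in dimension $6$ to a suitable isotropy submanifold, or in whatever form it is stated for the situation at hand, to conclude both the existence of the three isotropy $4$-manifolds and the genus bound on their maxima. Once the combinatorial structure is pinned down, pulling back to $M$ is routine: by Lemma \ref{isotropy} and Lemma \ref{fixedintersection} the fixed data of each $N_j$ injects into that of $M$, and Theorem \ref{locbet} converts the resulting Betti number inequality on the surfaces into $b_1(M) \leq b_3(M)$. I expect the main obstacle to be Proposition \ref{numberprop} — verifying that the congruence system forced by the hypothesis "all weights have modulus $> 1$" is genuinely infeasible, and organizing the case analysis so that it is clean rather than a sprawl of subcases.
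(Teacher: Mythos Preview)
Your outline follows the paper's route but has two genuine gaps that would stall the argument.

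\textbf{Where \cite[Theorem 9.10]{LP1} enters.} That theorem is a result about $6$-manifolds; it cannot be applied to $M$ directly, and it is \emph{not} the source of the three isotropy $4$-manifolds. In the paper the logic is: if some isotropy submanifold $N\supset M_{\min}$ has $\dim N=6$, apply \cite[Theorem 9.10]{LP1} inside $N$ (together with a case split on $\dim N_{\max}$) to finish. Otherwise every isotropy submanifold through $M_{\min}$ is $4$-dimensional. The three $N_i$ then come from the \emph{weight hypothesis}: since each nonzero weight $a_i$ at $M_{\min}$ has $|a_i|>1$, the component of $M^{\mathbb{Z}_{a_i}}$ through $M_{\min}$ is nontrivial; and it is $4$-dimensional precisely because no two $a_i$ share a common factor (if $\gcd(a_i,a_j)>1$ one would get a $6$-dimensional isotropy submanifold, already handled). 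So the $a_i$ are forced to be pairwise coprime, and $m_j=a_j$.

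Also, ``matching genera forces each $N_j$ to contain $M_{\max}$'' is not the right implication. What actually happens is: if $(N_j)_{\max}\neq M_{\max}$, then $(N_j)_{\max}$ is a non-extremal fixed surface with $b_1=b_1(M)$, and Theorem \ref{locbet} (plus Poincar\'e duality on $M$) already gives $b_1(M)\le b_3(M)$. So one \emph{assumes} each $N_j$ contains $M_{\max}$ and seeks a contradiction.

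\textbf{The arithmetic is much tamer than you expect.} You set up Proposition \ref{numberprop} with six unknowns $a_i,b_i$ and three moduli $m_j$; the paper collapses this to three unknowns. Since $N_i$ is a component of $M^{\mathbb{Z}_{a_i}}$ and $M_{\max}$ has codimension $2$ in $N_i$, the normal weight to $M_{\max}$ inside $N_i$ must have stabiliser exactly $\mathbb{Z}_{a_i}$. But the weights $w_1,w_2,w_3$ at $M_{\max}$ are pairwise coprime (same argument as above, with $M_{\max}$ in place of $M_{\min}$), so the only $2$-planes in $T_qM$ with nontrivial stabiliser are the three weight lines, with stabilisers $\mathbb{Z}_{|w_i|}$. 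Matching these with $\mathbb{Z}_{a_i}$ forces $\{w_1,w_2,w_3\}=\{-a_1,-a_2,-a_3\}$. Now \cite[Lemma 2.6]{T} yields the multiset congruence
\[
\{a_1,a_2,a_3\}\equiv\{-a_1,-a_2,-a_3\}\pmod{a_i}\quad(i=1,2,3),
\]
with $1<a_1<a_2<a_3$ pairwise coprime. Proposition \ref{numberprop} is then a short, clean elimination (the heart of it: modulo $a_3$ one gets $a_1+a_2=a_3$, then modulo $a_2$ one gets $2a_1\equiv 0$, contradicting coprimality), not the sprawling six-variable case analysis you were bracing for. Missing the identification $\{b_i\}=\{a_i\}$ is the main obstacle in your plan.
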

\begin{proof}
By Lemma \ref{fourmin} it remains to prove the claim when $\dim(M_{\min}) = \dim(M_{\max}) =2$. By Theorem \ref{locbet} $b_1(M_{\min}) = b_1(M_{\max}) =b_1(M)$, so if  $b_1(M_{\min})=0$ the claim is proven. Therefore, assume additionally that $b_1(M_{\min})>0$. Consider an isotropy submanifold  $N$ \footnote{That is, $N$ is a component of the fixed point set of the action of $\mathbb{Z}_k \subset S^1$ for some $k \geq 2$.} containing $M_{\min}$, it will be shown that the desired inequality holds when $N$ is $6$-dimensional by a case analysis on the dimension of $N_{\max}$. 

If $N_{\max}$ is $4$-dimensional then by Lemma \ref{fixedintersection} and the fact that $\dim(M_{\max})=2$ there is non-extremal $4$-dimensional component $(N_{\max} =) F \subset M^{S^1}$. By applying Theorem \ref{locbet} to $N$ gives $b_{1}(F)=b_{3}(F)=b_{1}(M)$ and by a dimension count $\lambda_{F}=1$. The desired inequality can then be proven by Theorem \ref{locbet} to $b_{3}(M)$.

Next, we note that $N_{\max}$ cannot be isolated. This can be shown by applying Theorem \ref{locbet} to $N$ itself, since $$b_{1}(N_{\max}) =b_{1}(N) = b_{1}(N_{\min}) = b_{1}(M_{\min}) = b_{1}(M)>0,$$ it follows that $N_{\max}$ cannot be isolated. 

In the final remaining case that $N_{\max}$ is $2$-dimensional,  by Theorem \ref{sixfixed}  there exists a non-extremal fixed surface $F \subset N^{S^1} \subset M^{S^1}$ with a map $f : F \rightarrow N_{\min} = M_{\min}$ of degree $\deg(f) \in \{1,2\}$. In particular, $f_{*}:  H_{1}(F,\mathbb{R}) \rightarrow H_{1}(M_{\min},\mathbb{R})$ is surjective and it follows that $b_{1}(F) \geq b_{1}(M_{\min}) = b_{1}(M_{\max}) = b_{1}(M)$, and the claim is proven by Theorem \ref{locbet} and applying Poincar\'{e} duality to $M$.  By either reversing the circle action by Lemma \ref{reverse} or running the same argument with opposite signs, it follows that $M_{\max}$ cannot be contained in an isotropy $6$-manifold. Hence, it remains to prove the statement when all the isotropy submanifolds containing the extremal fixed surfaces $M_{\min}$ and $M_{\max}$ are $4$-dimensional.

Combining the conclusion of the previous paragraph with the assumption that non-zero weights along the extremal fixed submanifolds have modulus at least $1$, implies that the weights at $M_{\min},M_{\max}$ must be pairwise coprime because otherwise there is an isotropy $6$-manifold containing them.  Suppose there exists an isotropy $4$-manifold $N$ containing $M_{\min}$ but not $M_{\max}$. Then by Theorem \ref{locbet} applied to $N$, $F:=N_{\max} \subset M^{S^1}$ would give a non-extremal fixed surface of $M$ with $b_{1}(F) =b_{1}(M_{\min}) = b_{1}(M)$. Since $F$ is by assumption non-extremal, by a dimension count only $\lambda_{F}=1,2$ respectively are possible. By applying Theorem \ref{locbet} to $b_{3}(M),b_{5}(M)$ respectively, this gives that at least one of $b_{3}(M),b_{5}(M)$ is bounded below by $b_{1}(M)$.  The equality $b_{3}(M) = b_{5}(M)$ from Poincar\'{e} duality then implies the desired inequality.  Therefore, the statement is proven unless each of the three isotropy $4$-manifolds containing $M_{\min}$ contains $M_{\max}$. This is ruled out in the following paragraph.

Let $a,b,c>1$ be the weights at $M_{\min}$, as noted above they are pairwise coprime and in particular mutually distinct and so may be labeled $a_1,a_2,a_3$ so that $1<a_1<a_2<a_3$. As was shown above, each of the three corresponding isotropy $4$-manifolds $N_{i}$ containing $M_{\min}$ which are components of $M^{\mathbb{Z}_{a_i}}$ $i=1,2,3$ respectively contains $M_{\max}$. 

Because $M_{\max}$ has codimension $2$ in $N_{i}$ the weight of the action on $N_{i}$ at $M_{\max}$ is $-a_i$.  By assumption the weights at $M_{\max}$ are denoted $\{w_1,w_2,w_3\}, $ then picking a point $ q \in M_{\max}$ the action on $T_{q}M$ is $$z.(z_1,z_2,z_3) = (z^{w_1} z_1, z^{w_{2}} z_2, z^{w_{3}} z_3),$$ where by the assumption that non-zero weights along $M_{\max}$ have modulus greater than $1$, $w_i<-1$ for $i=1,2,3$. Moreover, as above since there is no isotropy $6$-manifold containing $M_{\max}$, it holds that $\gcd(w_i,w_j) = 1$ for $i \neq j$. 

The tangent spaces of the three isotropy $4$-manifolds correspond to three (real) dimension $2$ subspaces on which the stabilizer of a non-zero point is $\mathbb{Z}_{a_i}$ respectively. Therefore since the condition $\gcd(w_i,w_j) = 1$ for $i \neq j$ implies there are precisely three $2$-dimensional subspaces on which the stabilizer of every non-zero point is non-trivial, it follows that $\{w_1,w_2,w_3\} = \{-a_1,-a_2,-a_3\}$.

Finally, applying \cite[Lemma 2.6]{T} implies that $$\{a_1,a_2,a_3\} =  \{-a_1,-a_2,-a_3\} \;\;  \mod a_i,$$ for $i=1,2,3$. In Lemma \ref{numberprop} it was shown that this is not possible.
\end{proof}

Theorem \ref{eightmain} has the following consequence, proved by choosing a suitably generic linear subcircle $S^1 \subset T^2$.

\begin{corollary} \label{eitghtttwo}
Let $(M,\omega)$ be a closed, connected, symplectic $8$-manifold with an effective Hamiltonian $T^2$-action. Then $b_{1}(M) \leq b_{3}(M)$.
\end{corollary}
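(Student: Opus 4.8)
The plan is to reduce the $T^2$-statement to Theorem~\ref{eightmain} by finding a linear subcircle $S^1 \subset T^2$ for which the hypothesis on the weights — that every non-zero weight along the extremal fixed components has modulus greater than $1$ — is automatically satisfied. First I would fix the Hamiltonian $T^2$-action with moment map $\mu: M \to \mathfrak{t}^* \cong \mathbb{R}^2$ and recall that, for any primitive $t \in l$ in the weight lattice, the vector field $X_t$ generates an effective Hamiltonian $S^1$-action with Hamiltonian $H_t = \langle \mu, t \rangle$, as in the proof of Lemma~\ref{edgeiso}. The key point is that $M_{\min}$ and $M_{\max}$ for the circle action $H_t$ are the preimages under $\mu$ of the faces of the polygon $\mu(M)$ on which the linear functional $\langle \cdot, t \rangle$ is minimized and maximized; in particular these are contained in fixed components of $T^2$ (indeed are $T^2$-invariant), and the weights of the $S^1$-action at any fixed point $p$ in such an extremal component are exactly the numbers $\langle w, t \rangle$, where $w$ ranges over the $T^2$-weights $w_1,\dots,w_4 \in l^*$ at $p$.

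Next I would make this precise: at a $T^2$-fixed point $p$ lying in $M_{\min}$ or $M_{\max}$ of the subcircle, the weights of the subcircle action are $\langle w_1, t\rangle,\dots,\langle w_4, t\rangle$, and a weight is non-zero precisely when $t \notin \ker(w_i)$. So to ensure all non-zero subcircle weights have modulus $\geq 2$, it suffices to choose $t \in l$ primitive such that for every $T^2$-fixed point $p$ and every $T^2$-weight $w$ at $p$, either $\langle w, t\rangle = 0$ or $|\langle w, t\rangle| \geq 2$. There are finitely many $T^2$-fixed components, hence finitely many weights $w$ arising; for each such $w \neq 0$ the set $\{ t \in l : |\langle w, t \rangle| = 1 \}$ is a union of two affine lines in $\mathfrak{t}$, i.e. a ``thin'' set, while the set $\{t : \langle w,t\rangle = 0\}$ is the rank-one sublattice $\ker(w) \cap l$. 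I would then argue that one can pick a primitive $t \in l$ avoiding all the finitely many bad affine lines $\{|\langle w, t\rangle| = 1\}$ — for instance by taking $t = (1, N)$ in a suitable integral basis with $N$ large, or more carefully by a counting/pigeonhole argument showing the primitive vectors in a large box cannot all be ruled out. With such a $t$ fixed, the subcircle $S^1 \subset T^2$ it generates is a non-trivial Hamiltonian $S^1$-action (non-trivial since the $T^2$-action is effective, so some weight $w$ has $\langle w,t\rangle \neq 0$) on the symplectic $8$-manifold $(M,\omega)$ all of whose non-zero extremal weights have modulus $> 1$, so Theorem~\ref{eightmain} gives $b_1(M) \leq b_3(M)$.

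The main obstacle I anticipate is the genericity/arithmetic step: one must be a little careful that the chosen $t$ is primitive in $l$ (so that $X_t$ really generates a circle and not a finite cover of one) while simultaneously avoiding all the lines $\{|\langle w, t\rangle| = 1\}$. This is where the argument could get fiddly — the naive ``choose $N$ large'' trick may land on a non-primitive lattice point, and one needs to confirm that the exceptional set (finitely many affine lines plus the non-primitive sublattice structure) cannot exhaust the primitive elements of $l$. I would handle this by working in an integral basis $e_1, e_2$ of $l$, writing each relevant weight as $w = (p_w, q_w)$ in the dual basis, and noting $\langle w, (1,m) \rangle = p_w + m q_w$; as $m$ ranges over integers this equals $\pm 1$ for at most finitely many $m$ per weight (when $q_w \neq 0$) or never (when $q_w = 0$ and $|p_w| \neq 1$) — and if some weight has $q_w = 0, |p_w| = 1$ one first changes basis so that no weight has this degenerate form, which is possible since there are finitely many weights. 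Then $(1,m)$ is automatically primitive for every $m$, and all but finitely many choices of $m$ work. The rest of the proof is then just invoking Theorem~\ref{eightmain}.
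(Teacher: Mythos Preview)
Your proposal is correct and expands in detail on precisely the approach the paper sketches in one line immediately after the corollary: derive it from Theorem~\ref{eightmain} by choosing a suitably generic linear subcircle $S^1\subset T^2$ so that every non-zero weight at the extremal components has modulus greater than~$1$. The paper also supplies a second, more self-contained proof using the moment-polygon geometry (Lemmas~\ref{edgeiso} and~\ref{vertexfixed} together with Lemma~\ref{fourmin}), bypassing Theorem~\ref{eightmain} entirely; your arithmetic genericity argument via $t=(1,m)$ with a preliminary change of basis is sound and does exactly what the paper's one-line remark asserts is possible.
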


In the following section we give a proof of Corollary \ref{eitghtttwo} which is comparatively self-contained. However, Theorem \ref{eightmain} is substantially stronger than Corollary \ref{eitghtttwo}.

\section{Higher dimensional torus actions}
The main purpose of this section is to show Theorem \ref{tenmain}, showing that the first two odd Betti numbers of a closed $10$-dimensional manifold with an effective Hamiltonian $T^2$-action are unimodal.  Finally, using similar ideas we give an alternative comparatively self-contained proof of Corollary \ref{eitghtttwo} of the previous section.

\begin{theorem} \label{tenmain}
Let $(M,\omega)$ be a closed, connected, symplectic $10$-manifold with an effective Hamiltonian $T^2$-action. Then $b_{1}(M) \leq b_{3}(M)$.
\end{theorem}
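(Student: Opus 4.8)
The plan is to follow the outline sketched in the introduction: use the results of Subsection~\ref{ttwo} to realise preimages of faces of the moment polygon as extremal fixed submanifolds of subcircle actions, dispose of all but one dimension profile via Lemma~\ref{fourmin}, and then import the six-dimensional result Theorem~\ref{six} through the localization formula Theorem~\ref{locbet}.

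First I would set up the reduction. Since the $T^2$-action is effective, no subcircle acts trivially, and hence (a constant component $\langle\mu,\xi\rangle$ would force $X_\xi=0$, so a rational direction in $\ker$ would give a trivially acting subcircle) the moment polygon $\mu(M)$ is genuinely two-dimensional; in particular it has at least one edge $E$ with an endpoint vertex $v$. By Lemma~\ref{edgeiso} and Lemma~\ref{vertexfixed}, $\mu^{-1}(E)$ and $\mu^{-1}(v)$ are extremal fixed components of non-trivial Hamiltonian circle actions on $M$ coming from linear subcircles of $T^2$. If any preimage of a vertex or an edge has dimension $0$, $4$, or $8 = 2n-2$, then Lemma~\ref{fourmin} immediately yields $b_1(M)\le b_3(M)$ and we are done; so I may assume every such preimage has dimension $2$ or $6$. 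Now $\mu^{-1}(v)\subset\mu^{-1}(E)$ is a \emph{proper} inclusion, since the moment images $\{v\}$ and $E$ have different dimensions, and both sets are compact connected submanifolds of $M$, so $\dim\mu^{-1}(v)<\dim\mu^{-1}(E)$; combined with both dimensions lying in $\{2,6\}$ this forces $\dim\mu^{-1}(E)=6$.

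Next I would exploit this six-dimensional piece. By Lemma~\ref{isotropy}, $\mu^{-1}(E)$ is a closed connected symplectic $6$-manifold, and by part~(3) of Lemma~\ref{edgeiso} it carries a non-trivial Hamiltonian $S^1$-action; hence Theorem~\ref{six} (Theorem~\ref{thm:mainA}) gives $b_1(\mu^{-1}(E))\le b_3(\mu^{-1}(E))$. To transfer this to $M$, I would use part~(2) of Lemma~\ref{edgeiso}: there is a Hamiltonian $S^1$-action on $M$ with $M_{\min}=\mu^{-1}(E)$. In Theorem~\ref{locbet} with $i=1$, every fixed component $F\ne M_{\min}$ satisfies $\lambda_F\ge 1$, so $b_{1-2\lambda_F}(F)=0$ and therefore $b_1(M)=b_1(\mu^{-1}(E))$; with $i=3$, the $M_{\min}$-term already contributes $b_3(\mu^{-1}(E))$ and all remaining terms are non-negative, so $b_3(M)\ge b_3(\mu^{-1}(E))$. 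Chaining the three inequalities gives $b_1(M)\le b_3(M)$.

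As for the main obstacle: the genuinely hard content has already been isolated into the auxiliary results — Lemma~\ref{fourmin} (via Kirwan's localization), the structural Lemmas~\ref{edgeiso} and~\ref{vertexfixed} on preimages of faces, and above all the base case Theorem~\ref{six} (which leans on \cite[Theorem 9.10]{LP1}). Granting those, the proof of Theorem~\ref{tenmain} is essentially bookkeeping; the points needing care are (i) checking that the effective $T^2$-action really produces a two-dimensional polygon so that an edge exists, (ii) the strict dimension inequality $\dim\mu^{-1}(v)<\dim\mu^{-1}(E)$ that pins the edge preimage to dimension $6$, and (iii) the vanishing/non-negativity bookkeeping in the localization formula that makes the transfer from $\mu^{-1}(E)$ back to $M$ lossless on the $b_1$ side and only gainful on the $b_3$ side. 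I expect step~(ii) — ensuring the dimension case analysis is exhaustive and that the proper inclusion is genuinely strict in dimension because both pieces are compact and connected — to be the subtlest accounting, though none of it is deep.
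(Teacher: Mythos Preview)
Your proposal is correct and follows essentially the same route as the paper: reduce via Lemma~\ref{fourmin} on the vertex/edge preimages, pin the remaining case to $\dim\mu^{-1}(E)=6$ by the strict dimension inequality, apply Theorem~\ref{six} to $\mu^{-1}(E)$, and then transfer back to $M$ using Lemma~\ref{edgeiso}(2) and the localization formula Theorem~\ref{locbet}. The only cosmetic differences are that you make explicit why the moment polygon is two-dimensional and why the vertex/edge inclusion is strict in dimension, and you observe that Lemma~\ref{edgeiso}(2) already gives $\mu^{-1}(E)=M_{\min}$ so no reversal is needed.
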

\begin{proof}
Firstly, by Lemma \ref{fourmin} if $(M,\omega)$ has a Hamiltonian $S^1$-action with an extremal component of dimension $0$, $4$ or $8$ then the theorem is proved, this will be used in the following argument.

Let $\mu$ denote the moment map of the $T^2$-action. Consider an edge of $\mu(M)$, say $E$, and let $v_{1},v_{2}$ be the two vertices it contains. Then, by Lemma \ref{edgeiso} and Lemma \ref{vertexfixed} there are Hamiltonian $S^1$-actions on $(M,\omega)$ with extremal submanifolds equal to $\mu^{-1}(E),\mu^{-1}(v_{1}),\mu^{-1}(v_{2})$. Therefore, by Lemma \ref{fourmin} and the fact that $ \dim(\mu^{-1}(v_{i})) < \dim(\mu^{-1}(E) ) $ for $i=1,2$, the desired inequality is proved unless it holds that $\dim(\mu^{-1}(E) )= 6$ and $ \dim(\mu^{-1}(v_{i})) = 2$ for $i=1,2$.

By Lemma \ref{edgeiso}(3) the $6$-manifold  $N: = \dim(\mu^{-1}(E)) $ itself has a non-trivial Hamiltonian $S^1$-action. By Theorem \ref{six} the Betti numbers of $N$ satisfy the inequality $b_{1}(N) \leq b_{3} (N)$. Finally by by Lemma \ref{edgeiso}(2) there exists a Hamiltonian $S^1$-action with extremal component equal to $N$; by possible reversing it by  Lemma \ref{reverse} we can assume $N = M_{\min}$. Then Theorem \ref{locbet} gives $$b_{1}(M) =  \sum_{F \subset M^{S^1}} b_{1 -2\lambda_F} (F) = b_{1}(N)$$ and $$b_{3}(M)  =  \sum_{F \subset M^{S^1}} b_{3 -2\lambda_F} (F) =   b_{3}(N) + c,$$ where $c$ is a non-negative integer. Therefore $b_{1}(M) = b_{1}(N) \leq b_{3}(N) \leq b_{3}(M).$\end{proof}
Next, we give an alternative proof of Corollary \ref{eitghtttwo}.

\begin{proof}[Alternative proof of Corollary \ref{eitghtttwo}]

Firstly, by Lemma \ref{fourmin} if an $8$-dimensional closed symplectic manifold $(M,\omega)$ has a Hamiltonian $S^1$-action with an extremal component of dimension $0$, $4$ or $6$ then the corollary is proven, this will be used in the following argument.

Now assume that $(M,\omega)$ be an $8$-dimensional closed symplectic manifold with an effective Hamiltonian $T^2$-action with moment map $\mu$. Let $v$ be any vertex of the moment polygon $\mu(M)$, and $E_{1},E_{2}$ the adjacent edges. Then, by Lemma \ref{edgeiso} and Lemma  \ref{vertexfixed}, all three of the submanifolds $\mu^{-1}(v), \mu^{-1}(E_{1})$ and $ \mu^{-1}(E_{2})$ are extremal submanifolds for certain Hamiltonian $S^1$-actions on $M$. Therefore  by Lemma \ref{fourmin} if any of them are 0,4 or 6 dimensional the proof is complete. Finally since $\dim(\mu^{-1}(v)) < \dim(\mu^{-1}(E_{i}) ) <  8$ for $i=1,2$, at least one of them has dimension equal to $0,4$ or $6$, completing the proof.
\end{proof}

\subsection{A result in arbitrary dimensions}
In this subsection a result in arbitrary dimensions is proven. A complete solution of \cite[Problem 4.3]{JHKLM} would imply for a closed symplectic manifold with a non-trivial Hamiltonian $S^1$-action, for each odd $i$ such that $1 < i \leq n$, $$b_1(M) \leq b_3(M) \leq \ldots \leq b_i(M).$$ In Theorem \ref{gendim}, we show that in arbitrary dimensions with Hamiltonian $T^2$-symmetry, this inequality holds for \textit{some} $i$. The following lemma about Betti numbers will be needed.

\begin{lemma} \label{oddgen}
Suppose that $(M,\omega)$ is a $2n$-dimensional, closed, connected, symplectic manifold with a non-trivial Hamiltonian $S^1$-action, and has an extremal fixed component $F_{0}$ of dimension $2k \neq 2$. Then there is an odd integer $i$ such that $1<i \leq n$ and $b_{1}(M) \leq b_{i}(M)$.
\end{lemma}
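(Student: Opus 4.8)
The plan is to reduce to the extremal-component case analysis of Lemma~\ref{unimod} and then quote the localization theorem. By Lemma~\ref{reverse} we may assume $F_0 = M_{\min}$, so that $\dim(F_0) = 2k$ with $2k \neq 2$; hence either $k = 0$ (i.e.\ $F_0$ is isolated) or $k \geq 2$. First I would dispose of the case $k = 0$: by Theorem~\ref{locbet}, $b_1(M) = \sum_{F} b_{1-2\lambda_F}(F) = b_1(M_{\min}) = b_1(\text{point}) = 0$, so the inequality $b_1(M) \leq b_i(M)$ holds trivially for any odd $i$ with $1 < i \leq n$ (and such an $i$ exists as soon as $n \geq 3$; the small cases $n = 1, 2$ should be noted separately, where the claim is either vacuous or follows because $b_1(M) = 0$ there too).

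For the main case $k \geq 2$, I would apply Lemma~\ref{unimod}, which gives both $b_1(M) \leq b_{2k-1}(M)$ and $b_1(M) \leq b_{2(n-k)+1}(M)$. Now I need to produce an odd integer $i$ with $1 < i \leq n$ that equals one of $2k-1$ or $2(n-k)+1$. Both of these are odd; both exceed $1$ since $k \geq 2$ implies $2k - 1 \geq 3$, and $k \leq n - 1$ (because $F_0 = M_{\min} \neq M$, as the action is non-trivial, so $\dim F_0 < 2n$) implies $2(n-k)+1 \geq 3$. So the only issue is the upper bound: I must show that at least one of $2k-1 \leq n$ or $2(n-k)+1 \leq n$ holds. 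But $2(n-k)+1 \leq n \iff n + 1 \leq 2k \iff k \geq (n+1)/2$, while $2k - 1 \leq n \iff k \leq (n+1)/2$. These two conditions cover all values of $k$, so for every admissible $k$ at least one of the two inequalities from Lemma~\ref{unimod} has its index in the required range $(1, n]$. Choosing $i$ to be that index completes the argument.

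The step I expect to require the most care is precisely this last bookkeeping: checking that the two ranges $\{2k-1 : k \geq 2\}$ and $\{2(n-k)+1 : k \leq n-1\}$ together cover an odd integer in $(1,n]$, and in particular handling the boundary value $k = (n+1)/2$ when $n$ is odd (there both indices coincide and equal $n$ itself, which is fine) and confirming that when $n$ is even neither range skips over the interval. A clean way to present this is: set $i = 2k - 1$ if $k \leq (n+1)/2$ and $i = 2(n-k)+1$ otherwise; then verify $1 < i \leq n$ in each branch directly from $2 \leq k \leq n-1$, and invoke the corresponding inequality of Lemma~\ref{unimod}.
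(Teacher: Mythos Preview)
Your proof is correct and follows essentially the same route as the paper: handle $k=0$ via Theorem~\ref{locbet} to get $b_1(M)=0$, and for $k\geq 2$ invoke Lemma~\ref{unimod} to obtain the two inequalities $b_1(M)\leq b_{2k-1}(M)$ and $b_1(M)\leq b_{2(n-k)+1}(M)$, then observe that at least one of the indices lies in $(1,n]$. The paper phrases this last step as ``they differ by a reflection centered at $n$'' (i.e.\ their sum is $2n$, so one is at most $n$), which is exactly your covering argument $k\leq (n+1)/2$ versus $k\geq (n+1)/2$; you are simply more explicit about the bookkeeping, including the bound $k\leq n-1$ and the degenerate small-$n$ cases, which the paper leaves implicit.
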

\begin{proof}
If $k=0$ then $b_{1}(M)=0$ by Theorem \ref{locbet}. If $2< 2k <2n$, then by Lemma \ref{unimod} the inequalities $b_{1}(M) \leq b_{2k-1}(M)$ and $b_{1}(M) \leq b_{2n-2k+1}(M)$ hold. The assumption $2k>2$ implies that the two odd integers $2k-1$ and $2n-2k+1$ are greater than $1$. Moreover since they differ by a reflection centered at $n$, at least one of them is at most $n$. 
\end{proof}

The main result of this subsection is as follows.

\begin{theorem} \label{gendim}
Let $(M,\omega)$ be a closed, connected, symplectic $2n$-dimensional manifold with an effective Hamiltonian $T^2$-action, then there is an odd integer $i$ such that $1<i \leq n$ and  $b_{1}(M) \leq b_{i}(M)$.
\end{theorem}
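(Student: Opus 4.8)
The plan is to reduce, via the moment polygon of the $T^2$-action, to the situation handled by Lemma \ref{oddgen}. First I would take any vertex $v$ of the moment polygon $\mu(M)$ together with the two edges $E_1,E_2$ adjacent to $v$. By Lemma \ref{vertexfixed} and Lemma \ref{edgeiso}(2), each of the three submanifolds $\mu^{-1}(v)$, $\mu^{-1}(E_1)$, $\mu^{-1}(E_2)$ is the extremal fixed component of some Hamiltonian $S^1$-action on $(M,\omega)$. The key dimension inequality is $\dim \mu^{-1}(v) < \dim \mu^{-1}(E_i) < 2n$ for $i=1,2$: the strict inequality $\dim\mu^{-1}(v) < \dim\mu^{-1}(E_i)$ holds because passing from the circle $S_{E_i}$ to the circle associated to $v$ strictly enlarges the isotropy group being fixed, and $\dim\mu^{-1}(E_i) < 2n$ since $E_i$ is a proper face. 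All three dimensions are even (these are symplectic submanifolds by Lemma \ref{isotropy}).

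Now a parity/counting argument finishes it. The three even numbers $\dim\mu^{-1}(v) < \dim\mu^{-1}(E_1), \dim\mu^{-1}(E_2)$ all lie strictly between $0$ and $2n$ inclusive on the lower end — actually $\dim\mu^{-1}(v)\geq 0$ and both $\dim\mu^{-1}(E_i)\leq 2n-2$. If any one of these three is $0$, apply Lemma \ref{oddgen} to the corresponding $S^1$-action with $2k=0$ to get $b_1(M)=0\leq b_i(M)$ for any odd $i$ in range. Otherwise all three are nonzero even numbers, hence in particular $\neq 2$ unless one of them equals $2$; if $\dim\mu^{-1}(v)=2$ or $\dim\mu^{-1}(E_i)=2$ I cannot directly invoke Lemma \ref{oddgen} for that one. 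But since $\dim\mu^{-1}(v) < \dim\mu^{-1}(E_1)$ with both positive even, we have $\dim\mu^{-1}(E_1)\geq 4$, so $\dim\mu^{-1}(E_1)$ is an even number with $4\leq \dim\mu^{-1}(E_1)\leq 2n-2$, i.e. $2k\neq 2$ and $2k\neq 0$. Applying Lemma \ref{oddgen} to the Hamiltonian $S^1$-action from Lemma \ref{edgeiso}(2) with extremal component $\mu^{-1}(E_1)$ then yields the desired odd $i$ with $1<i\leq n$ and $b_1(M)\leq b_i(M)$.

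The main obstacle — really the only nontrivial point — is confirming the strict dimension inequality $\dim\mu^{-1}(v) < \dim\mu^{-1}(E)$ for an edge $E$ containing $v$, and ruling out the degenerate possibility that some $\mu^{-1}(E_i)$ is itself $2$-dimensional, which I resolve precisely by using the vertex: $\mu^{-1}(v)$ has strictly smaller even dimension, forcing $\dim\mu^{-1}(E_i)\geq 4$ whenever $\dim\mu^{-1}(v)\geq 2$, and if instead $\dim\mu^{-1}(v)\leq 1$ then $\dim\mu^{-1}(v)=0$ and we are in the $k=0$ case. A small subtlety worth checking is effectivity: the subcircles produced in Lemma \ref{edgeiso} and Lemma \ref{vertexfixed} act nontrivially (they have a nonzero weight at the relevant $T^2$-fixed point), so the hypotheses of Lemma \ref{oddgen} are genuinely met. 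Beyond that, everything is bookkeeping on the three integers $\dim\mu^{-1}(v), \dim\mu^{-1}(E_1), \dim\mu^{-1}(E_2)$.
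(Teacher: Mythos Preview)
Your proposal is correct and follows essentially the same route as the paper: use Lemmas~\ref{edgeiso} and~\ref{vertexfixed} to realize $\mu^{-1}(v)$ and $\mu^{-1}(E_i)$ as extremal components of Hamiltonian $S^1$-actions, observe that the strict inequality $\dim\mu^{-1}(v)<\dim\mu^{-1}(E_i)$ forces at least one of these even dimensions to be $\neq 2$, and then apply Lemma~\ref{oddgen}. Your write-up is in fact more explicit than the paper's three-line proof, spelling out the case split on $\dim\mu^{-1}(v)$ and verifying nontriviality of the subcircle actions needed to invoke Lemma~\ref{oddgen}.
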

\begin{proof}
By Lemma \ref{edgeiso} and Lemma  \ref{vertexfixed} the moment map preimages of all vertices and edges of the moment polygon are extremal submanifolds for certain Hamiltonian $S^1$-actions on $M$. Since the preimage of an edge has greater dimension than the preimages of the vertices it contains, at least one of these submanifolds does not have dimension $2$. Therefore the proof follows by Lemma \ref{oddgen}.
\end{proof}

Department of Mathematics Education, Sungkyunkwan University, Seoul, Republic of Korea.\\

\noindent Email: 20260280@skku.edu.

\end{document}